\newtheorem{theorem}{Theorem}[section]
\newtheorem{lemma}[theorem]{Lemma}
\theoremstyle{definition}
\newtheorem{remark}[theorem]{Remark}
\numberwithin{equation}{section}
\def\pmod #1{\ ({\rm{mod}}\ #1)}
\def\Z{\Bbb Z}
\def\N{\Bbb N}
\def\R{\Bbb R}
\def\l{\left}
\def\r{\right}
\def\bg{\bigg}
\def\({\bg(}
\def\){\bg)}
\def\t{\text}
\def\f{\frac}
\def\gen{{\rm gen}}
\def\spn{{\rm spn}}
\def\Aut{{\rm Aut}}
\def\ls{\leqslant}
\def\gs{\geqslant}
\def\se {\subseteq}
\def\ve{\varepsilon}
\def\eq{\equiv}
\def\Proof{\noindent{\it Proof}}
\theoremstyle{plain}
\begin{document}

\baselineskip=17pt
\hbox{Acta Arith.  193 (2020), no.\,3, 253--268.}
\medskip

\title[On the 1-3-5 conjecture and related topics]{On the 1-3-5 conjecture and related topics}

\author[H.-L. Wu and Z.-W. Sun]{Hai-Liang Wu and Zhi-Wei Sun}

\address {(Hai-Liang Wu)  Department of Mathematics, Nanjing
University, Nanjing 210093, People's Republic of China}
\email{whl.math@smail.nju.edu.cn}

\address
{(Zhi-Wei Sun, corresponding author) Department of Mathematics
\\ Nanjing University\\
 Nanjing 210093\\ People's Republic of China}
\email {zwsun@nju.edu.cn}

\date{}

\begin{abstract}
The 1-3-5 conjecture of Z.-W. Sun states that any $n\in\N=\{0,1,2,\ldots\}$ can be
written as $x^2+y^2+z^2+w^2$ with $w,x,y,z\in\N$ such that $x+3y+5z$ is a square. In this paper, via the theory of ternary quadratic forms and related modular forms,
we study the integer version of the 1-3-5 conjecture and related weighted sums of four squares with
certain linear restrictions.
Here are two typical results in this paper:
(i) There is a finite set $A$ of positive integers such that any sufficiently large integer not in the
set $\{16^ka:\ a\in A,\ k\in\N\}$ can be written as
$x^2+y^2+z^2+w^2$ with $x,y,z,w\in\Z$ and $x+3y+5z\in\{4^k:\ k\in\N\}$.
(ii) Any positive integer can be written as $x^2+y^2+z^2+2w^2$ with $x,y,z,w\in\Z$ and $x+y+2z+2w=1$.
Also, any sufficiently large integer can be written as $x^2+y^2+z^2+2w^2$ with $x,y,z,w\in\Z$ and
$x+2y+3z=1$.
\end{abstract}

\subjclass[2020]{Primary 11E25; Secondary 11D85, 11E20, 11F27, 11F37.}

\keywords{1-3-5 conjecture, sums of four squares, ternary quadratic forms, modular
forms.}

\maketitle

\section{Introduction}

In 1770 Lagrange established his celebrated four-square theorem which states that any
$n\in\N=\{0,1,2,\ldots\}$ can be written as the sum of four squares.
In 1917 S. Ramanujan \cite{R} listed 55 possible quadratuples $(a,b,c,d)$ of positive integers with $a\ls
b\ls c\ls d$ such that
$$\{ax^2+by^2+cz^2+dw^2:\ x,y,z,w\in\Z\}=\N.$$
In 1927 L. E. Dickson \cite{D27} confirmed 54 of them and pointed out that the remaining one on
Ramanujan's list is wrong.

Recently, Z.-W. Sun \cite{S17a, S17b} refined Lagrange's four-square theorem in various ways and posed
many conjectures on restricted sums of four squares. In particular, he \cite{S17a} would like to offer
\$1350 (USD) for the first solution of his following conjecture.
\medskip

\noindent{\bf 1-3-5 Conjecture} (Sun \cite{S17a}). Any $n\in\N$ can be written as $x^2+y^2+z^2+w^2$
($x,y,z,w\in\N$) with $x+3y+5z$ a square.
\medskip

This conjecture looks quite challenging, and it has been verified for $n$ up to $10^{10}$ by Q.-H. Hou at
Tianjin Univ. In this direction, Y.-C. Sun and Z.-W. Sun \cite[Theorem 1.8]{SS} used Euler's four-square
identity to show that any $n\in\N$ can be written as
$x^2+y^2+z^2+w^2$ with $x,y,5z,5w\in\Z$ such that $x+3y+5z$ is a square.

In \cite{S17b} Z.-W. Sun investigated weighted sums of four squares with certain linear restrictions, for
example, he showed that any $n\in\Z^+=\{1,2,3,\ldots\}$ can be written as $x^2+y^2+z^2+2w^2$ with
$x,y,z,w\in\Z$ and $y+z+w=1$, and conjectured that each $n\in\Z^+$
can be written as $x^2+y^2+z^2+2w^2$ with $x,y,z,w\in\Z$ and $x+2y+3z=1$.

In this paper we study the integer version of the 1-3-5 conjecture and some conjectures of Sun
\cite[Conjectures 4.12-4.13]{S17b} along this line. We establish the following two theorems via the theory
of ternary quadratic forms and related modular forms.

\begin{theorem} \label{Th1.1}
{\rm (i)} Any $n\in \Z^+$ can be written as $x^2+y^2+z^2+2w^2$ $(x,y,z,w\in \Z)$ with any of the following
restrictions:
\begin{equation*}
y+3z+2w=1,\ x+y+2z+2w=1,\ x+y+2z+2w=2.
\end{equation*}

{\rm (ii)} Let $\lambda\in\{1,3\}$. Then any $n\in \Z^+$ can be written as $x^2+y^2+z^2+3w^2$ $(x,y,z,w\in
\Z)$ with $2y+z+\lambda w=1$.

{\rm (iii)} Any $n\in \Z^+$ can be written as $x^2+y^2+2z^2+3w^2$ $(x,y,z,w\in \Z)$ with $y+2z+3w=1$.
Also, each $n\in \Z^+$ can be written as $x^2+y^2+3z^2+4w^2$ $(x,y,z,w\in\Z)$ with $y+z+2w=1$.
\end{theorem}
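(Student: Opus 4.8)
The plan is to reduce each of the seven assertions in Theorem~\ref{Th1.1} to a representation problem for a positive-definite \emph{ternary} quadratic form. Consider, as a model, the claim that every $n\in\Z^+$ can be written as $x^2+y^2+z^2+2w^2$ with $x+y+2z+2w=1$. Using the linear relation to eliminate $x=1-y-2z-2w$ and substituting, one sees that such a representation of $n$ exists if and only if
\[
n-1=2y^2+5z^2+6w^2+4yz+4yw+8zw-2y-4z-4w
\]
for some $y,z,w\in\Z$. Multiplying by a suitable positive constant and completing the square repeatedly turns the right-hand side into a genuine homogeneous positive-definite ternary form evaluated at integral linear forms in $y,z,w$; the upshot is that $n$ is representable if and only if a fixed ternary form $T$ represents the integer $cn+d$, with $(c,d)$ explicit, by a vector lying in a prescribed coset of a finite-index sublattice of $\Z^3$ --- equivalently, by a vector subject to explicit congruence conditions modulo small moduli. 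I would carry out precisely this reduction for all of $y+3z+2w=1$, the two cases $x+y+2z+2w=1,2$, the two cases $2y+z+\la w=1$ with $\la\in\{1,3\}$, and $y+2z+3w=1$, $y+z+2w=1$ (in the five cases where $x$ is unconstrained, $x^2$ simply survives as one more square in the final ternary form), keeping careful track of the resulting ternary forms --- whose discriminants involve only the primes $2$, $3$, $5$ --- and of the accompanying cosets.

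The second, and main, step is to show that in each case the ternary form $T$ represents every integer of the shape $cn+d$ $(n\in\Z^+)$ that lies in the allowed coset. Here the key input is that each $T$ arising above is \emph{regular} in the sense of Dickson --- it occurs on the classical list of regular ternary forms, or, what already suffices, its genus consists of a single class --- so that $T$ represents exactly the integers it represents $p$-adically at every prime $p$ and over $\R$. Combining this with the coset condition reduces the problem to a purely local one, which I would check holds for every $n\ge1$ by a finite amount of $p$-adic bookkeeping at the relevant primes. The finitely many $n$ for which this analysis is inconclusive --- typically owing to the behaviour at the prime $2$, or at a prime dividing the discriminant --- would be settled by exhibiting explicit representations found by direct search.

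The principal obstacle is to arrange, through a judicious choice of the particular solution of the linear equation and of the auxiliary multiplier, that the ternary form one lands on is one of these good forms: a careless reduction can instead produce a form whose genus contains several classes, in which case representability is no longer governed by congruences alone and one is forced onto the harder modular-forms route --- writing the relevant weight-$3/2$ theta series as an Eisenstein series plus a cusp form and invoking the bound $O(m^{1/2-\delta})$ (some $\delta>0$) of Duke for the Fourier coefficients of the latter --- which yields only the conclusion for \emph{sufficiently large} $n$. A secondary but genuinely delicate point, even with a regular form in hand, is the $2$-adic (and, in parts (ii) and (iii), also the $3$-adic) analysis: one must verify that the congruence conditions inherited from the completed squares and from the coset are simultaneously compatible with the local representability of $cn+d$ for \emph{every} positive integer $n$, and this compatibility, together with pinning down and disposing of the genuinely small exceptional values, is where the bulk of the careful work lies. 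Parts (ii) and (iii), involving the forms $x^2+y^2+z^2+3w^2$, $x^2+y^2+2z^2+3w^2$ and $x^2+y^2+3z^2+4w^2$, are treated in exactly the same manner, the only change being the bookkeeping of the discriminants $3$, $6$, $12$ and the associated small primes.
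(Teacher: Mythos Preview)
Your overall strategy --- eliminate via the linear constraint and reduce to representability by a positive ternary form, then appeal to regularity --- is exactly the paper's. The paper carries it out by exhibiting, for each restriction, a specific identity that writes $cn+d$ as a value of a diagonal regular form from Dickson's tables (e.g.\ $y+3z+2w=1$ becomes $12n-1=2r^2+3s^2+3t^2$; the two $x+y+2z+2w$ cases go through $88n-11$ and $88n-44$ as values of $x^2+8y^2+44z^2$ with a congruence modulo~$8$ on the first coordinate), and then unwinds the congruence classes case by case to produce $x,y,z,w$. Two places where your sketch is too optimistic: the primes entering are not confined to $2,3,5$ --- the paper's reductions bring in $11$ for the $x+y+2z+2w$ cases and $7$ for the final clause of part~(iii); and, more substantively, that final clause lands on the form $x^2+7y^2+14z^2$, whose genus contains \emph{two} classes, so neither ``regular'' nor ``class number one'' applies off the shelf. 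The paper (Lemma~3.5) handles this by an explicit identity transferring any representation of $56n-24$ by the companion class $2x^2+7y^2+7z^2$ back to $x^2+7y^2+14z^2$ --- precisely the obstacle you flag as ``principal'', and it does in fact occur. Finally, your direct-search fallback for small $n$ is never needed: in every case the congruence bookkeeping covers all $n\ge1$ without exception.
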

\begin{remark}\label{Rem1.1}
Our proof of Theorem \ref{Th1.1} uses some regular ternary quadratic forms as well as the genus theory of
ternary quadratic forms.\end{remark}

\begin{theorem} \label{Th1.2}
{\rm (i)} Any sufficiently large integer $n$ with $16\nmid n$ can be written as $x^2+y^2+z^2+w^2$ with
$x,y,z,w\in \Z$ and $x+3y+5z\in \{1,4\}$.

{\rm (ii)} Any sufficiently large integer $n$ can be written as $x^2+y^2+z^2+2w^2$ with $x,y,z,w\in \Z$
and $x+\mu y+\nu z=1$, where $(\mu,\nu)$ is any of the ordered pairs
$(2,3),\ (2,5)$ and $(3,4)$. Also, each sufficiently large integer $n$ can be written as
$x^2+y^2+z^2+2w^2$ with $x,y,z,w\in \Z$ and $2y+z+w=1$.

{\rm (iii)} Each sufficiently large integer $n$ can be written as $x^2+y^2+2z^2+3w^2$ with $x,y,z,w\in \Z$
and $P(x,y,z,w)=1$, whenever $P(x,y,z,w)$ is among the polynomials
\begin{align*}
x+2y+w,\ y+z+w,\ y+2z+w.
\end{align*}
\end{theorem}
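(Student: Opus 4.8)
The plan is to prove all three parts of Theorem~\ref{Th1.2} by one and the same method, which I sketch for (i); parts (ii) and (iii) differ only in the bookkeeping. \emph{Step 1 (reduction to a ternary form).} Fix $s\in\{1,4\}$. The integer solutions of $x+3y+5z=s$ are exactly $x=s-3a-5b,\ y=a,\ z=b$ with $a,b\in\Z$, so $n=x^2+y^2+z^2+w^2$ becomes
\[
n-s^2=10a^2+30ab+26b^2+w^2-6sa-10sb .
\]
Completing the square in $a,b$ removes the linear terms at the cost of translating the variables by fixed rationals; clearing denominators yields an identity $Mn-cs^2=T(a',b',w')$ with $M,c$ fixed positive integers, $T$ an explicit positive-definite ternary quadratic form, and $a',b',w'$ confined to a fixed coset of $\Z^3$ modulo a fixed modulus. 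Thus $n$ has the desired representation for this $s$ exactly when $N:=Mn-cs^2$ is represented by $T$ under that congruence restriction, and sieving out the congruence this is the same as $N$ being represented by at least one of finitely many positive-definite ternary forms. The identical elimination works for every assertion of (ii) and (iii)---often with no scaling at all, since there one may eliminate a variable occurring with coefficient $\pm1$---so each statement reduces to a problem of this shape.

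\emph{Step 2 (local solvability).} Since $T$ is positive definite, $N$ is represented over $\R$ for $N\ge 0$; for $p\nmid 2M\det T$ representability with the congruence condition is automatic, and at the finitely many remaining $p$ it is a finite computation in $\Z_p$. For part (i) this computation succeeds as soon as $16\nmid n$, which is the source of that hypothesis; for (ii) and (iii) it succeeds for all $n$. Hence, for all large admissible $n$, $N$ is represented everywhere locally with the prescribed local conditions.

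\emph{Step 3 (local to global via modular forms).} Encoding the congruence conditions, the number of admissible representations of $N$ is the $N$-th Fourier coefficient of a weight-$3/2$ form $\vartheta$ on some $\Gamma_0(4L)$ with a quadratic nebentypus; decompose $\vartheta=\vartheta_{\mathrm{Eis}}+\vartheta_{\mathrm{cusp}}$. The $N$-th coefficient of $\vartheta_{\mathrm{Eis}}$ equals, up to a positive constant, the product of an archimedean density and the $p$-adic densities of the representation problem; by Step 2 this product is positive, and it is $\gg_\varepsilon N^{1/2-\varepsilon}$ for every $N$ in the correct global residue class outside a finite union of square classes $\{t^2r\}$ (those on which a density at a prime $p\mid 2L$ is too small, or a spinor-genus obstruction occurs). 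By the subconvexity bounds of Iwaniec and Duke for Fourier coefficients of half-integral weight cusp forms, the $N$-th coefficient of $\vartheta_{\mathrm{cusp}}$ is $O_\varepsilon\!\bigl(N^{1/2-\delta+\varepsilon}\bigr)$ for some absolute $\delta>0$. Hence the $N$-th coefficient of $\vartheta$ is positive once $N$ is large and lies in no bad square class; translating those classes through $N=Mn-cs^2$ and taking the union over the finitely many forms (for (i), over $s\in\{1,4\}$), every sufficiently large admissible $n$---i.e.\ every large $n$ with $16\nmid n$ in (i), every large $n$ in (ii) and (iii)---has the desired representation.

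The serious obstacle is the passage from ``$N$ represented by the genus of $T$'', which the Eisenstein coefficient controls, to ``$N$ represented by $T$ itself''. When $T$ is alone in its genus this is free (and one then gets all $n$, as in Theorem~\ref{Th1.1}); otherwise one must bring in the spinor-genus theory of positive ternary forms, check that $N$ is primitively represented by the spinor genus of $T$, and locate the finitely many spinor-exceptional square classes. It is the interplay of these exceptional classes with the $2$-adic density that has to be shown to leave exactly the single constraint $16\nmid n$ in (i) and no constraint at all in (ii) and (iii); by contrast the cusp-form bound in Step~3 is a ready-made input and the algebra of Steps~1--2 is routine.
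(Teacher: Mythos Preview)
Your strategy is the same one the paper uses: eliminate the constrained variable, reduce to representation of $N=Mn-c$ by an explicit positive ternary form, invoke local representability plus Duke--Schulze-Pillot to pass from genus to form for large $N$, then unwind the algebra. So there is no disagreement on the method.

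What separates your sketch from a proof is exactly the piece you flag as ``the serious obstacle''. The paper does not leave the spinor-genus analysis as an open-ended finite check; it \emph{removes} it by choosing, case by case, a ternary form whose genus contains only one spinor genus. This is done via the discriminant criterion of Lemma~\ref{Lem2.4}: for instance, in (i) the form is $5x^2+7y^2+70z^2$ (discriminant $2\cdot5^2\cdot7^2$, so no $p^3$ divides it and $r=s=t=0$ with $d(f)$ not a multiple of $16$), and similarly $2x^2+7y^2+84z^2$, $5x^2+6y^2+60z^2$, $x^2+26y^2+156z^2$, $x^2+11y^2+55z^2$, $x^2+11y^2+99z^2$, $x^2+5y^2+30z^2$ in (ii)--(iii). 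With a single spinor genus there are no spinor-exceptional classes at all, and Lemma~\ref{Lem2.3} applies directly once one checks bounded divisibility at the (unique) anisotropic prime. Only one subcase, $P=x+2y+w$ in (iii), lands on a genus with two spinor genera ($x^2+y^2+32z^2$ in Lemma~\ref{Lem4.2}); there the paper identifies the single exceptional square class $2\Z^2$ and notes that the odd target $16n-3$ avoids it.

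So your plan is right, but to turn it into a proof you must actually name the ternary forms, verify the one-spinor-genus property (or, where it fails, pin down the exceptional classes), and confirm the anisotropic-prime bookkeeping that produces the $16\nmid n$ hypothesis in (i) and no hypothesis in (ii)--(iii). The paper carries all of this out explicitly; your Step~2 (``a finite computation'') and the last paragraph are where that work has to happen.
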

\begin{remark}\label{Rem1.2}
Our proof of Theorem \ref{Th1.2} involves the theory of spinor exceptional square classes for quadratic
forms and some significant results on modular forms of weight $3/2$.
\end{remark}

Clearly, Theorem \ref{Th1.2}(i) implies that there is a finite set $A\se\Z^+$
such that any sufficiently large integer not in the set $\{16^ka:\ a\in A,\, k\in\N\}$ can be written as
$x^2+y^2+z^2+w^2$
with $x,y,z,w\in\Z$ and $x+3y+5z\in\{4^k:\,k\in\N\}$. This provides remarkable progress on the integer
version of the 1-3-5 conjecture.
Note that Sun \cite[Conjecture 4.5(ii)]{S17b} conjectured that any $n\in\Z^+$ can be written as
$x^2+y^2+z^2+w^2$
with $x,y,z,w\in\N$ and $|x+3y-5z|\in\{4^k:\,k\in\N\}$.
Sun \cite [Conjectures 4.12-4.13]{S17b} also conjectured that Theorem \ref{Th1.2} remains valid
if we don't require $n\in\Z^+$ sufficiently large, but we have been unable to prove this.

We will give in Section 2 a brief overview of the theory of ternary quadratic forms and modular
forms of weight $3/2$ which we need in later proofs, and show Theorem \ref{Th1.1} and Theorem \ref{Th1.2} in Sections 3 and 4 respectively.

\section{Some preparations}

Let
\begin{equation}\label{2.1} f(x,y,z)=ax^2+by^2+cz^2+ryz+szx+txy
\end{equation}
be a positive definite ternary quadratic form with integral coefficients. Its associated matrix  is
 $$A=\begin{pmatrix} 2a & t &s \\t & 2b &r \\ s & r &2c \end{pmatrix}.$$
The {\it discriminant} of $f$ is given by $d(f):=\det(A)/2$.

Our proof of Theorem \ref{Th1.1} is closely related to the arithmetic theory of quadratic forms. The following
lemma is one of the most important results on integral representations of quadratic forms (cf. \cite [pp.
129]{C}).
\begin{lemma}\label{Lem2.1}
Let $f$ be a nonsingular integral quadratic form and let $m$ be a nonzero integer represented by $f$ over
the real field $\R$
and the ring $\Z_p$ of $p$-adic integers for each prime $p$.
Then $m$ is represented by some form $f^*$ over $\Z$ with $f^*$ in the same genus of $f$.
\end{lemma}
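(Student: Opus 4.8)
The plan is to recast everything in the geometric language of quadratic lattices and to build $f^{*}$ by modifying the lattice of $f$ prime by prime. Let $V$ denote the rational quadratic space attached to $f$, let $L\subseteq V$ be the $\Z$-lattice whose norm form is $f$ with respect to some $\Z$-basis, and for a prime $p$ put $V_{p}=V\otimes_{\Q}\Q_{p}$ and $L_{p}=L\otimes_{\Z}\Z_{p}$; since $f$ is nonsingular, $V$ and every $V_{p}$ is a regular quadratic space. By hypothesis $m$ is represented by $L_{p}$ for every prime $p$ and by $V\otimes\R$, so in particular $f$ represents $m$ over $\Q_{v}$ for every place $v$ of $\Q$ (including $v=\infty$). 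Applying the Hasse--Minkowski theorem to $f\perp\langle-m\rangle$ then shows that $f$ represents $m$ over $\Q$: there is a vector $v_{0}\in V$ with $Q(v_{0})=m$, where $Q$ is the quadratic map.

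Next I would transport $v_{0}$ into a modification of $L$ at each prime. For every $p$ choose $w_{p}\in L_{p}$ with $Q(w_{p})=m$, which exists by assumption; since $m\neq0$ and $v_{0},w_{p}$ both have norm $m$ in the regular space $V_{p}$, Witt's extension theorem furnishes an isometry $\sigma_{p}$ of $V_{p}$ with $\sigma_{p}(v_{0})=w_{p}$. As $v_{0}$ involves denominators from only finitely many primes, $v_{0}\in L_{p}$ for all $p$ outside a finite set $S$, and for such $p$ I simply take $\sigma_{p}=\mathrm{id}$ (and $w_{p}=v_{0}$). The family $\{\sigma_{p}^{-1}(L_{p})\}_{p}$ of $\Z_{p}$-lattices then agrees with $\{L_{p}\}_{p}$ for all $p\notin S$, hence is coherent, so there is a $\Z$-lattice $L'\subseteq V$ with $L'\otimes\Z_{p}=\sigma_{p}^{-1}(L_{p})$ for every $p$, explicitly $L'=\bigcap_{p}\bigl(\sigma_{p}^{-1}(L_{p})\cap V\bigr)$. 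Let $f^{*}$ be the norm form of $L'$. Each $\sigma_{p}$ is an isometry, so $L'\otimes\Z_{p}\cong L_{p}$ as $\Z_{p}$-lattices, and $L'$ lies on the same rational space $V$ as $L$; thus $f^{*}$ is in the genus of $f$. Finally $\sigma_{p}(v_{0})=w_{p}\in L_{p}$ gives $v_{0}\in\sigma_{p}^{-1}(L_{p})$ for every $p$, whence $v_{0}\in L'$ and $f^{*}$ represents $m=Q(v_{0})$ over $\Z$.

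The steps invoking Witt's theorem (legitimate precisely because $m\neq0$ and $f$ is nonsingular), the triviality of $\sigma_{p}$ outside a finite set, and the descent of a coherent family of local lattices to a single global lattice are all standard facts about quadratic spaces and lattices, recorded for instance in Cassels. The one place where something genuinely must be proved --- and hence the main obstacle --- is the local-to-global passage producing the rational vector $v_{0}$ with $Q(v_{0})=m$: this is exactly the Hasse--Minkowski principle, and it is what supplies the whole construction with a seed vector to move around. It is also worth noting that one cannot in general strengthen the conclusion to ``$f$ itself represents $m$'': representability over every $\Z_{p}$ and over $\R$ need not force an integral representation by the given form, which is exactly why the genus, rather than the class, appears in the statement.
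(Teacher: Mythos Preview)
Your argument is correct and is in fact the standard proof of this fact via Hasse--Minkowski, Witt extension, and descent of a coherent family of local lattices. The paper itself does not prove the lemma at all: it simply quotes it as a known result from Cassels' \emph{Rational Quadratic Forms} (p.~129), and the proof found there is essentially the one you have written. So there is nothing to compare beyond noting that you have supplied exactly the argument the cited reference contains.
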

\begin{remark}\label{Rem2.1}
According to the effective methods in \cite [pp. 186--187]{Jones}, one may easily obtain all the eligible
numbers of the genus of an integral ternary quadratic form $f$.
\end{remark}

A positive definite ternary quadratic form $f$ is called {\it regular} if it represents every integer represented
by the genus of $f$.
Dickson was the first to study regular quadratic forms systematically and he \cite [pp. 112--113]{D39} listed all
the 102  diagonal regular ternary quadratic forms $ax^2+by^2+cz^2$
with $1\ls a\ls b\ls c$ and $\gcd(a,b,c)=1$, together with the structure of
$$E(a,b,c)=\{n \in\N:\ n\not= ax^2+by^2+cz^2\ \mbox{for all}\ x,y,z\in\Z\}.$$

In the present paper, we need the following well known results from \cite [pp. 112--113]{D39}.
\begin{lemma} {\rm (Dickson \cite[pp. 112-113]{D39})} \label{Lem2.2} We have

\begin{align*} E(1,1,2)=&\{4^k(16m+14): k,m\in \N\},
\\ E(1,1,6)=&\{9^k(9m+3) : k,m\in \N\},
\\E(1,2,6)=&\{4^k(8m+5) : k,m\in \N\},
\\ E(2,3,3)=&\{9^k(3m+1) : k,m\in \N\},
\\E(1,1,16)=&\bigcup_{k,m\in\N}\{4^k(8m+7),4m+3,8m+6,32m+12\}.
\end{align*}
\end{lemma}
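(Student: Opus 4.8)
Since Lemma~\ref{Lem2.2} is classical, the quickest route is to cite Dickson's tables directly; but the underlying argument is the standard two-step reduction, which I sketch here. Each of the five forms $f$ in question --- $x^2+y^2+2z^2$, $x^2+y^2+6z^2$, $x^2+2y^2+6z^2$, $2x^2+3y^2+3z^2$ and $x^2+y^2+16z^2$ --- is one of Dickson's $102$ regular diagonal ternary forms, so by definition $f$ represents over $\Z$ \emph{every} nonnegative integer that is represented by the genus of $f$. Combining this with Lemma~\ref{Lem2.1}, for such an $f$ one has: $n=f(x,y,z)$ is solvable in integers if and only if $n\ge 0$ and $n$ is represented by $f$ over $\Z_p$ for every prime $p$. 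Hence $E(a,b,c)$ is precisely the set of nonnegative integers that fail some $p$-adic condition, and the whole problem reduces to a finite collection of local computations.

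These I would carry out as follows. For a prime $p\nmid 2d(f)$ the $\Z_p$-lattice attached to $f$ is unit-diagonal and unimodular of rank $3$, hence splits off a hyperbolic plane over $\Z_p$ and so represents every element of $\Z_p$; thus such $p$ impose no restriction. Here $d(f)=8,\,24,\,48,\,72,\,64$ respectively, so it remains to treat $p=2$ in every case and $p=3$ for the three forms $x^2+y^2+6z^2$, $x^2+2y^2+6z^2$, $2x^2+3y^2+3z^2$. For each relevant $p$ one reduces modulo a small power of $p$, lists the residues attained, and then uses Hensel lifting together with the scaling reduction --- if $p^2\mid n$ then $n$ is represented by $f$ if and only if $n/p^2$ is --- to promote a residue obstruction to a full family; the factor $4^k$ comes from $p=2$ and the factor $9^k$ from $p=3$. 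For instance, for $x^2+y^2+2z^2$ the squares lie in $\{0,1,4,9\}$ and the values $2z^2$ in $\{0,2,8\}$ modulo $16$, so $14\bmod 16$ is the unique missed class and $E(1,1,2)=\{4^k(16m+14)\}$; modulo $9$ one has $x^2+y^2\not\equiv 3,6$ while $6z^2\in\{0,6\}$, which pins down $E(1,1,6)=\{9^k(9m+3)\}$, and the analogous observation $2x^2\in\{0,2\}$ modulo $3$ gives $E(2,3,3)=\{9^k(3m+1)\}$; the $2$-adic analysis yields $E(1,2,6)=\{4^k(8m+5)\}$; and for $x^2+y^2+16z^2$, where $16z^2$ contributes only $0$ or $16$ modulo $32$, one combines several $2$-adic classes to obtain $\bigcup_{k,m}\{4^k(8m+7),\,4m+3,\,8m+6,\,32m+12\}$.

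The one genuinely nontrivial point --- and the reason one normally defers to Dickson --- is the \emph{regularity} of these forms, i.e.\ that nothing is lost in passing from representability by the genus of $f$ to representability by $f$ itself. For the first four forms the relevant genus has class number one, so regularity is automatic; for $x^2+y^2+16z^2$ it requires Dickson's explicit verification. In practice I would therefore cite \cite[pp.~112--113]{D39} for the regularity and regard the local computations above as routine; a fully self-contained proof would instead first verify the class numbers by reduction of ternary forms as in \cite[pp.~186--187]{Jones}, and then run the congruence analysis just described.
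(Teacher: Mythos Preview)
The paper gives no proof of Lemma~\ref{Lem2.2} at all: it is stated as a quotation from Dickson \cite[pp.~112--113]{D39} and used as a black box. Your proposal therefore goes strictly further than the paper, supplying the standard two-step argument (regularity reduces global representability to genus representability, and Lemma~\ref{Lem2.1} reduces the latter to local representability at the primes dividing $2d(f)$), together with sample congruence computations. This outline is correct, and your closing remark that in practice one simply cites Dickson is exactly what the authors do.

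Two small points worth tightening if you keep the sketch. First, the scaling principle ``$p^2\mid n$ implies $n$ is represented by $f$ iff $n/p^2$ is'' is not a general fact about ternary forms; it holds here because each $f$ either has $p\nmid d(f)$ or, at the relevant bad prime, the $\Z_p$-lattice has the right structure to force $p\mid(x,y,z)$ whenever $p^2\mid f(x,y,z)$ (and this is what produces the $4^k$ and $9^k$ factors). You should either state the hypothesis under which the reduction is valid or simply note that it is checked case by case. Second, the claim that the first four genera have class number one is true but is itself a nontrivial verification (e.g.\ for $2x^2+3y^2+3z^2$ with $d(f)=72$); since you already defer the regularity of $x^2+y^2+16z^2$ to Dickson, it is cleaner to defer regularity uniformly rather than assert class numbers you have not computed.
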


To understand our proof of Theorem \ref{Th1.2}, we need to introduce the theory of spinor exceptional square
classes for quadratic forms and some significant results of modular forms of weight $3/2$. (The readers
may consult \cite{DR, Hanke, KS, SP80, Siegel}).

 Suppose that $a\in \Z$ is represented by the genus of $f$ given by (\ref{2.1}).
  M. Kneser \cite{Ke} proved that $a$ is represented either by all or by exactly half of the spinor genera in
  the genus. We call $a$  {\it spinor exceptional} for the genus of $f$ if $a$ is represented by exactly
  half of these spinor genera. R. Schulze-Pillot \cite{SP80} determined completely when $a$ is spinor
  exceptional for the genus of $f$. A. G. Earnest, J. S. Hisa and D. C. Hung \cite{EHH} found a similar
  characterization of primitively spinor exceptional numbers.
In view of the work in \cite{DR, EHH, SP00}, there are only finitely many spinor exceptional square
classes for each positive definite ternary quadratic form.

Now, we introduce some basic notations which can be found in \cite{C, Ki, Oto}.
For the positive definite ternary quadratic form $f$ given by (\ref{2.1}), $\Aut(f)$ denotes the group of integral
isometries of $f$.
 For $n\in\N$, write
 $$r(f,n):=\#\{(x,y,z)\in \Z^3 : f(x,y,z)=n\}$$
 (where $\#S$ denotes the cardinality of a set $S$), and let
  \begin{equation*}r(\gen(f),n):=\bigg(\sum_{f^*\in \gen(f)}\f1{\#\Aut(f^*)}\bigg)^{-1}\sum_{f^*\in \gen(f)}\frac{r(f^*,n)}{\# \Aut(f^*)},
  \end{equation*}
  where the summation is over a set of representatives of the classes in the genus of $f$.
  Similarly, we define
  \begin{equation*}
  r(\spn(f),n):=\bigg(\sum_{f^*\in \spn(f)}\f1{\#\Aut(f^*)}\bigg)^{-1}\sum_{f^*\in \spn(f)}\frac{r(f^*,n)}{\# \Aut(f^*)},
  \end{equation*}
  where the summation is over a set of representatives of the classes in the spinor genus of $f$.
  By \cite{DR}, for each $n\in\Z^+$ not in the spinor exceptional square classes we have
  \begin{equation*}
  r(\gen(f),n)=r(\spn(f),n).
  \end{equation*}
It is well known that the theta series
\begin{equation*}
\theta_f(z)=\sum_{n\gs0}r(f,n)e^{2\pi inz}
\end{equation*}
(with $z$ in the upper half plane) is a modular form of weight $3/2$ (cf. \cite{Shimura}). According to W. Duke and R. Schulze-Pillot's celebrated work \cite{DR}, if we write
\begin{equation*}
\theta_f=\theta_{\gen(f)}+(\theta_{\spn(f)}-\theta_{\gen(f)})+(\theta_f-\theta_{\spn(f)}),
\end{equation*}
 then $\theta_{\gen(f)}$ is an Eisenstein series, $\theta_{\spn(f)}-\theta_{\gen(f)}$ is a cusp form with
 its Fourier coefficients
 supported at finitely many spinor exceptional square classes, and on the other hand, $\theta_f-\theta_{\spn(f)}$ is a
 cusp form whose Shimura's lift is also a cusp form.

 We now give a brief discussion on the Fourier coefficients of the above Eisenstein series.
 According to the Siegel-Minkowski formula, we have
 \begin{equation*}
 r(\gen(f),n)=2\pi\sqrt{\frac{4n}{d(f)}}\ \prod_p\alpha_p(f,n),
 \end{equation*}
where $p$ runs over all primes and $\alpha_p(f,n)$ is the local density
(for more details on local density, the readers may consult \cite[Section 5.6]{Ki}) given by
\begin{equation*}
\alpha_p(f,n):=\lim_{k\to +\infty}p^{-2k}\#\{(x,y,z)\in(\Z/p^k\Z)^3: \ f(x,y,z)=n+p^k\Z\}.
\end{equation*}

If $n\in\N$ is not represented by $f$ locally, then the $n$-th coefficient of $\theta_{\gen(f)}$ vanishes.
If $n\in\N$ is represented by the genus of $f$ and $n$ has bounded divisibility at each anisotropic prime,
then by \cite[Lemma 5]{DR} we have
$r(\gen(f),n) \gg n^{1/2-\ve}$ for any $\ve>0$;
the bound here is ineffective because it relies on Siegel's lower bound for the class number of an arbitrary imaginary quadratic field.

We now turn to the cusp form. According to \cite{Duke}, if $n\in\Z^+$ is not in any of the spinor
exceptional square classes, then for all $\ve >0$,
the $n$-th Fourier coefficient of the cusp form $\theta_f-\theta_{\spn(f)}$ are  $\ll
n^{1/2-1/28+\ve}$. (This bound is effective as pointed out by the referee.)

In view of the above, we have the following well-known lemma (cf. \cite{DR}).
\begin{lemma}\label{Lem2.3} Let $f$ be a positive definite ternary quadratic form given by $(\ref{2.1})$. If $n$ is
represented by the genus of $f$ and not in any
 of the spinor exceptional square classes, and $n$ has bounded divisibility at each anisotropic prime,
 then $n$ is represented by $f$ over $\Z$ provided that $n$ is large enough.
\end{lemma}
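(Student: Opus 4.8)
The plan is to read off the $n$-th Fourier coefficient of $\theta_f$ from the canonical decomposition $\theta_f=\theta_{\gen(f)}+(\theta_{\spn(f)}-\theta_{\gen(f)})+(\theta_f-\theta_{\spn(f)})$ recorded above, and to show that the Eisenstein part contributes a main term of order $n^{1/2-\ve}$ which swamps the two cuspidal contributions once $n$ is large. First I would observe that, by definition of the genus theta series, the $n$-th Fourier coefficient of the Eisenstein series $\theta_{\gen(f)}$ equals $r(\gen(f),n)$. Since $n$ is represented by the genus of $f$, it is represented by $f$ over $\R$ and over every $\Z_p$, so each local density $\alpha_p(f,n)$ is positive and the Siegel--Minkowski formula gives $r(\gen(f),n)=2\pi\sqrt{4n/d(f)}\,\prod_p\alpha_p(f,n)>0$; because $n$ has bounded divisibility at every anisotropic prime, \cite[Lemma 5]{DR} strengthens this to the lower bound $r(\gen(f),n)\gg_\ve n^{1/2-\ve}$ for each fixed $\ve>0$.

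Next I would dispose of the two cusp forms. The cusp form $\theta_{\spn(f)}-\theta_{\gen(f)}$ has its Fourier coefficients supported on the finite set of spinor exceptional square classes, so under our hypothesis on $n$ its $n$-th coefficient is $0$. The cusp form $\theta_f-\theta_{\spn(f)}$ has $n$-th Fourier coefficient $\ll_\ve n^{1/2-1/28+\ve}$, again precisely because $n$ avoids the spinor exceptional square classes (this is the bound of Duke quoted above). Adding the three contributions gives $r(f,n)=r(\gen(f),n)+O_\ve(n^{1/2-1/28+\ve})$. Now fix $\ve>0$ with $1/28-2\ve>0$, say $\ve=1/100$: then the main term is $\gg n^{1/2-\ve}$ while the error is $\ll n^{1/2-1/28+\ve}$, and $1/2-1/28+\ve<1/2-\ve$, so $r(f,n)>0$ for all sufficiently large $n$; that is, $n$ is represented by $f$ over $\Z$.

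The real subtlety lies not in the assembly but in its quality: the lower bound $r(\gen(f),n)\gg n^{1/2-\ve}$ — and hence the implicit threshold hidden in ``provided $n$ is large enough'' — is ineffective, resting ultimately on Siegel's lower bound for the class number of an imaginary quadratic field, so the lemma cannot be made explicit by this route. I would also keep track of exactly where the two hypotheses enter: the condition that $n$ lie outside the finite set of spinor exceptional square classes is what annihilates the middle cusp form and licenses Duke's subconvex bound for the third term, while the bounded-divisibility condition at the anisotropic primes is exactly what is needed to invoke \cite[Lemma 5]{DR} for the Eisenstein lower bound.
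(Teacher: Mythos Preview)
Your proposal is correct and is exactly the argument the paper has in mind: the paper does not give a separate proof of Lemma~\ref{Lem2.3} but simply records it as ``well-known (cf.~\cite{DR})'' after laying out precisely the three ingredients you assemble---the decomposition of $\theta_f$, the Eisenstein lower bound $r(\gen(f),n)\gg n^{1/2-\ve}$ from \cite[Lemma~5]{DR} under bounded divisibility at the anisotropic primes, and Duke's subconvex bound $\ll n^{1/2-1/28+\ve}$ for the cuspidal piece. One small clarification: Duke's bound on the coefficients of $\theta_f-\theta_{\spn(f)}$ holds for \emph{all} $n$, since this cusp form has cuspidal Shimura lift; the hypothesis that $n$ avoid the spinor exceptional square classes is needed only to kill the middle term $\theta_{\spn(f)}-\theta_{\gen(f)}$, not to license the Duke estimate.
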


We also need the following basic result on the number of spinor genera in a genus.

\begin{lemma}\label{Lem2.4} {\rm (\cite[p.\,202]{C})} Let $f$ be a positive definite ternary quadratic form given
by $(\ref{2.1})$.
Suppose that the genus of $f$ contains at least two spinor genera. Then either $r,s,t$ are all even and
$d(f)$ is a multiple of $16$,
or $p^3\mid d(f)$ for some odd prime $p$.
\end{lemma}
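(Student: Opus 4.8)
The plan is to reduce the lemma, via the idelic description of spinor genera, to a computation of local spinor norm groups, and then to treat the odd primes and the prime $2$ separately; the latter is where the real work lies. Let $L$ be a $\Z$-lattice of rank $3$ carrying $f$. I would start from the standard fact that the number of spinor genera in $\gen(f)$ equals $[\mathbb{A}^{\times}:\Q^{\times}\,\theta(\mathrm{O}^{+}_{\mathbb{A}}(L))]$, where $\mathbb{A}^{\times}$ is the idele group of $\Q$ and $\theta(\mathrm{O}^{+}_{\mathbb{A}}(L))=\R^{>0}\times\prod_{p}\theta(\mathrm{O}^{+}(L_{p}))$, the archimedean factor being $\R^{>0}$ because $f$ is positive definite (every real rotation is a product of two reflections in vectors of positive norm). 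Since $\Q$ has class number one, $\mathbb{A}^{\times}=\Q^{\times}\cdot(\R^{>0}\times\prod_{p}\Z_{p}^{\times})$, so $\gen(f)$ is a single spinor genus as soon as $\theta(\mathrm{O}^{+}(L_{p}))\supseteq\Z_{p}^{\times}$ for every prime $p$. It therefore suffices to prove the contrapositive of the lemma: if $p$ is odd and $p^{3}\nmid d(f)$ then $\theta(\mathrm{O}^{+}(L_{p}))\supseteq\Z_{p}^{\times}$, and moreover $\theta(\mathrm{O}^{+}(L_{2}))\supseteq\Z_{2}^{\times}$ whenever it is not the case that $r,s,t$ are all even and $16\mid d(f)$.

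For an odd prime $p$ with $p^{3}\nmid d(f)$, I would diagonalise $L_{p}\cong\langle p^{i}u_{1}\rangle\perp\langle p^{j}u_{2}\rangle\perp\langle p^{k}u_{3}\rangle$ with $0\ls i\ls j\ls k$ and $u_{1},u_{2},u_{3}\in\Z_{p}^{\times}$; then $i+j+k=v_{p}(d(f))\ls2$, so the three scales are not pairwise distinct and $L_{p}$ has a Jordan constituent $N$ of rank $\gs2$ and scale $p^{m}$ with $m\ls1$. For any $u\in N$ with $v_{p}(Q(u))=m$, the reflection $\tau_{u}$ is integral over $\Z_{p}$ — the divisibility needed holds because $B(x,u)\in p^{m}\Z_{p}$ for $x\in N$ and $B(x,u)=0$ for $x$ in the complementary constituent — and $\theta(\tau_{u})=Q(u)$. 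Since a unimodular $\Z_{p}$-lattice of rank $\gs2$ ($p$ odd) represents every unit, $p^{-m}Q(u)$ runs through all of $\Z_{p}^{\times}$ as $u$ varies; taking products $\tau_{u}\tau_{u'}\in\mathrm{O}^{+}(L_{p})$ then yields $\theta(\mathrm{O}^{+}(L_{p}))\supseteq\langle Q(u)Q(u')\rangle=\Z_{p}^{\times}$, as desired.

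The prime $p=2$ is the main obstacle, and there I would work with a Jordan splitting of $L_{2}$ over $\Z_{2}$ into constituents $2^{m}\langle u\rangle$, $2^{m}\left(\begin{smallmatrix}2&1\\1&2\end{smallmatrix}\right)$ and $2^{m}\left(\begin{smallmatrix}0&1\\1&0\end{smallmatrix}\right)$, and use the known description of $2$-adic spinor norm groups (Kneser; Earnest--Hsia; see also \cite[pp.\,201--202]{C}). Here "$r,s,t$ all even'' is exactly the condition that $L$ is classically integral, i.e.\ that the matrix $A$ in $(\ref{2.1})$ equals $2M$ for the integral Gram matrix $M$ of the bilinear form, whence $d(f)=4\det M$ and "$16\mid d(f)$'' reads $v_{2}(\det M)\gs2$. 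When this combined condition fails, one shows that $L_{2}$ still carries an integral reflection $\tau_{w}$ whose norm $Q(w)$ has minimal $2$-adic valuation but nonsquare unit part — including reflections in vectors of norm $2u$ with $u\in\Z_{2}^{\times}$, $u\equiv5\pmod 8$, which are integral over $\Z_{2}$ though not over $\Z$ — and combining these with the rotations inside a rank-$2$ Jordan constituent forces the spinor norms to exhaust $\Z_{2}^{\times}$ (which, modulo squares, is generated by $\{-1,3,5\}$). The genuinely laborious point is the bookkeeping over the finitely many $\Z_{2}$-Jordan types that are not excluded by the hypothesis, checking in each case that such reflections are present; for this I would rely on the published $2$-adic tables, or on Cassels's own treatment on pp.\,201--202, since the idelic reduction and the odd-prime argument are otherwise routine.
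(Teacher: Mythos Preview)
The paper does not prove this lemma at all; it is simply quoted from Cassels \cite[p.\,202]{C} as a known criterion, so there is no in-paper argument to compare against. Your sketch is exactly the standard proof that stands behind Cassels's statement: pass to the idelic description of the spinor genus count, use that $\Q$ has class number one to reduce to the local inclusions $\Z_p^{\times}\subseteq\theta(\mathrm{O}^{+}(L_p))$, and verify these prime by prime. Your odd-prime step is correct --- with $v_p(d(f))\le 2$ the Jordan exponents $(i,j,k)$ lie in $\{(0,0,0),(0,0,1),(0,0,2),(0,1,1)\}$, so there is always a constituent of rank $\ge 2$ at a single scale, and reflections in its minimal-norm vectors are integral and their pairwise products sweep out $\Z_p^{\times}$ in spinor norm. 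At $p=2$ you correctly isolate the genuine difficulty (the $\Z_2$-Jordan case analysis) and then defer to Cassels's own pages, which is precisely what the paper itself does by citing the result. In short, your proposal is a faithful expansion of the cited source rather than a different route; there is nothing to correct, only the acknowledged $2$-adic bookkeeping left to fill in from the reference.
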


\section{Proof of Theorem \ref{Th1.1}}

\begin{lemma}\label{Lem3.1}
For any $n\in \Z^+$, we can write $88n-11$ as $x^2+8y^2+44z^2$ with $x,y,z\in\Z$ and $x\equiv 1\pmod 8$.
\end{lemma}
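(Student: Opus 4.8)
The plan is to trade the congruence condition $x\equiv1\pmod8$ for a parity condition and then invoke the representation theory of ternary forms recalled in the preceding section. Since $x^2+8y^2+44z^2$ depends on $x$ only through $x^2$, it is enough to produce a representation with $x\equiv\pm1\pmod8$, i.e.\ with $x^2\equiv1\pmod{16}$, and then replace $x$ by $-x$ if necessary. As $88n-11\equiv5\pmod8$, reducing the equation $88n-11=x^2+8y^2+44z^2$ modulo $8$ forces both $x$ and $z$ to be odd. Reducing modulo $16$, one has $44z^2\equiv12\pmod{16}$ (because $z$ is odd), while $8y^2\equiv8\pmod{16}$ or $8y^2\equiv0\pmod{16}$ according to the parity of $y$, and $88n-11\equiv8n+5\pmod{16}$; comparing these, $x^2\equiv1\pmod{16}$ holds if and only if $y\not\equiv n\pmod2$. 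So it suffices to prove that $88n-11=x^2+32y^2+44z^2$ is solvable in integers when $n$ is odd, and that $88n-11=x^2+8y^2+44z^2$ has a solution with $y$ odd when $n$ is even.

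For each of these I would exploit the divisibility $11\mid88n-11$. Since $11\mid44z^2$, one has $11\mid x^2+8y^2$ (resp.\ $11\mid x^2+32y^2$), and as $-8$ and $-32$ are quadratic residues modulo $11$ this forces a linear relation $x\equiv cy\pmod{11}$ with $c$ fixed, the degenerate possibility $11\mid x$, $11\mid y$ being more restrictive and treated separately. Writing $x=cy+11w$ and, if necessary, reducing the resulting binary form by a unimodular substitution, the quantity $x^2+8y^2$ (resp.\ $x^2+32y^2$) becomes $11$ times a reduced positive binary form in two new variables; dividing the whole equation by $11$, the problem turns into representing $8n-1$ by a definite ternary quadratic form, subject to a residue condition on the new variables which records $x\equiv1\pmod8$ together with the prescribed parity of $y$. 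Applying the device of the first paragraph once more --- encode this residue condition as congruences, and pass to the corresponding sublattice --- I would reduce each case to the assertion that a fixed affine-linear function of $n$ is represented by a ternary quadratic form that is regular (of the same nature as the Dickson forms in Lemma~\ref{Lem2.2}) or at least has a genus consisting of a single class, whereupon Lemma~\ref{Lem2.1} applies. One then finishes by computing the genus in question and checking, via the effective procedure noted in Remark~\ref{Rem2.1}, that the relevant integer lies outside every exceptional congruence class, so that it is represented by the genus, hence by the form itself.

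The hard part will be this last reduction together with the uniform-in-$n$ verification: one must push the congruence $x\equiv1\pmod8$ through the substitutions carefully enough that what remains is genuinely a representation problem for a regular (or one-class) ternary form, and then confirm the local conditions for every $n\in\Z^+$, not merely for large $n$ (where softer analytic input, in the spirit of the proof of Theorem~\ref{Th1.2}, would already do). This is delicate precisely because $88n-11\equiv5\pmod8$ already lies in $E(1,2,6)=\{4^k(8m+5):k,m\in\N\}$, so a crude choice of auxiliary form is doomed; it is the passage through the prime $11$ that rescues the argument.
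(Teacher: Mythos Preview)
Your opening reduction is correct: in any representation $88n-11=x^2+8y^2+44z^2$ one has $x,z$ odd, and $x^2\equiv1\pmod{16}$ iff $y\not\equiv n\pmod 2$. But from there you only outline a programme---divide through by $11$, hope to land on a regular or one-class ternary form, then check local conditions---without ever identifying the form or carrying out the verification. You yourself flag this as ``the hard part''; as written, no proof has been given.

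The paper's argument is the inverse of your plan and is much shorter. Instead of starting from $88n-11$ and dividing by $11$, it starts from $8n-1$. Since $8n-1$ is odd, it avoids $E(1,1,2)$, so $8n-1=a^2+2b^2+(2c)^2$ with $a,b,c\in\Z$; reducing mod $4$ forces $a,b$ both odd. Now use the Gauss-type identity
\[
(3a-2b)^2+8\Bigl(\frac{a+3b}{2}\Bigr)^2=11(a^2+2b^2),
\]
(valid as $a+3b$ is even) to get $88n-11=(3a-2b)^2+8\bigl(\tfrac{a+3b}{2}\bigr)^2+44c^2$. The congruence $x\equiv1\pmod8$ is then arranged by a sign choice: take $a\equiv1\pmod4$ and $b\equiv(3a-1)/2\pmod4$, whence $3a-2b\equiv1\pmod8$. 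There is no case split on the parity of $n$, and the only regular form invoked is $x^2+y^2+2z^2$.

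So your instinct to pass through the prime $11$ is right, but the efficient direction is \emph{multiply up} from $8n-1$ via an explicit identity rather than \emph{divide down} from $88n-11$ and search for a suitable auxiliary form. Your parity-of-$y$ analysis, while correct, is superseded by the direct control of $3a-2b\pmod8$ coming from the sign freedom on $a,b$.
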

\begin{proof}
By Lemma \ref{Lem2.2}, there are $a,b,c\in\Z$ such that $8n-1=a^2+2b^2+(2c)^2$.
 As $a^2+2b^2\eq-1\pmod4$, we have $2\nmid ab$. Without loss of generality, we may assume that $a\eq1\pmod
 4$ (otherwise we may replace $a$ by $-a$)
 and that $b\eq (3a-1)/2\pmod 4$ (since $(3a-1)/2$ is odd). Therefore
\begin{equation*}
88n-11=11(8n-1)=(3a-2b)^2+8\l(\frac{a+3b}{2}\r)^2+44c^2
\end{equation*}
with $3a-2b\eq1\pmod8$. This concludes the proof.
\end{proof}

\begin{lemma}\label{Lem3.2} For any $n\in\Z^+$, we can write
$88n-44$ as  $x^2+8y^2+44z^2$ with $x,y,z\in\Z$ and $x\equiv 2\pmod 8$.
\end{lemma}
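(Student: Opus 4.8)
The plan is to mirror the proof of Lemma~\ref{Lem3.1}, replacing the factorization $88n-11=11(8n-1)$ by $88n-44=44(2n-1)$ and the target congruence $x\equiv1\pmod8$ by $x\equiv2\pmod8$. Since $2n-1$ is odd and every member of $E(1,1,2)=\{4^k(16m+14):k,m\in\N\}$ is even, Lemma~\ref{Lem2.2} lets us write $2n-1=a^2+2b^2+c^2$ for some $a,b,c\in\Z$. As $2n-1$ is odd, exactly one of $a,c$ is odd, and by interchanging $a$ and $c$ if need be I may assume $a$ is odd; the parity of $b$ is unconstrained.

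The algebraic heart is the identity $44(a^2+2b^2)=(6a-4b)^2+8(a+3b)^2$, which is $4$ times the identity $11(a^2+2b^2)=(3a-2b)^2+2(a+3b)^2$ already used for Lemma~\ref{Lem3.1}. Multiplying the relation $2n-1=a^2+2b^2+c^2$ through by $44$ and applying this identity to the $44(a^2+2b^2)$ part yields
\begin{equation*}
88n-44=44(2n-1)=(6a-4b)^2+8(a+3b)^2+44c^2,
\end{equation*}
which is checked by expanding $(6a-4b)^2+8(a+3b)^2=44a^2+88b^2$. Thus it only remains to arrange $x:=6a-4b\equiv2\pmod8$.

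Because $a$ is odd, $x=6a-4b$ is automatically $\equiv2\pmod4$, so the remaining task is to fix $x$ modulo $8$, i.e.\ to force $3a-2b\equiv1\pmod4$. When $b$ is odd this is equivalent to $a\equiv1\pmod4$, and when $b$ is even it is equivalent to $a\equiv3\pmod4$; in either case I can produce the required residue of $a$ modulo $4$ by replacing $a$ with $-a$ (which alters neither $b,c$ nor the displayed identity). This finishes the proof. I expect the only real content to be this last $2$-adic bookkeeping — the choice of which of $a,c$ is called $a$ together with the sign normalization of $a$ — rather than any genuine obstacle, the statement being a companion of Lemma~\ref{Lem3.1}. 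As a sanity check, for $n=1$ one has $1=(-1)^2+2\cdot0^2+0^2$, so the formula gives $(x,y,z)=(-6,-1,0)$ with $(-6)^2+8\cdot(-1)^2+44\cdot0^2=44=88\cdot1-44$ and $-6\equiv2\pmod8$.
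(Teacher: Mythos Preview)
Your argument is correct and essentially identical to the paper's: both start from a representation $2n-1=a^2+2b^2+c^2$ via $E(1,1,2)$ (the paper writes the even square variable as $4c^2$ from the outset), apply the identity $44(a^2+2b^2)=(6a-4b)^2+8(a+3b)^2$, and normalize the sign of the odd $a$ so that $a\equiv(-1)^{b-1}\pmod4$, yielding $6a-4b\equiv2\pmod8$. Your case split on the parity of $b$ is exactly this normalization unpacked.
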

\begin{proof}
By Lemma \ref{Lem2.2}, we can write $2n-1=a^2+2b^2+4c^2$ with $a,b,c\in\Z$. Without loss of generality, we
assume that $a\eq(-1)^{b-1}\pmod4$.
Clearly,
\begin{equation*}
88n-44=44(2n-1)=(6a-4b)^2+8(a+3b)^2+44(2c)^2
\end{equation*}
and
$$6a-4b\eq 6(-1)^{b-1}-4b\eq 2(-1)^b+4b\eq2\pmod 8.$$
So the desired result follows.
\end{proof}
\medskip
\noindent{\it Proof of Theorem} \ref{Th1.1}(i). (1) We first consider the restriction $y+3z+2w=1$.
By  Lemma \ref{Lem2.2}, we can write $12n-1=2r^2+3s^2+3t^2$ with $r,s,t\in\Z$.
Clearly, $3\nmid r$ and $s\not \equiv t\pmod 2$.
Since $-1\equiv 2r^2+3\pmod 4$,  we have $2\mid r$. Without loss of generality, we may further assume that
$$ r=2(3u+1),\ s=2x,\ t=4z-1$$
with $u,x,z\in\Z$. Then
$$12n-1=2(2(3u+1))^2+3(2x)^2+3(4z-1)^2$$
and hence
$$n=x^2+(1-z+2u)^2+z^2+2(-z-u)^2.$$
Let $y=1-z+2u$ and $w=-z-u$. Then $y+3z+2w=1$ as desired.

(2) Now we consider the restriction $x+y+2z+2w=1$.
By Lemma \ref{Lem3.1},  $88n-11=r_1^2+44r_2^2+8(s+11t)^2$ for some $r_1,r_2,s,t\in\Z$ with $r_1\equiv
1\pmod 8$, $s\eq 2r_1\pmod{11}$ and $0\ls s\ls10$.
(Note that $-r_1^2\eq 8(2r_1)^2\pmod{11}$.)
As $r_1^2+44r_2^2\eq-11\eq5\pmod8$, we have $2\nmid r_2$.
Write $r_2=2u-1$ if $s\ls 6$, and $r_2=2u-3$ if $s\in\{7,8,9,10\}$.
Also, we may write $r_1=88v+s^*$ with $v\in\Z$, where
$$s^*=\begin{cases}33-16s&\mbox{if}\ s\in\{0,\ldots,6\},\\121-16s&\mbox{if}\
s\in\{7,8,9,10\}.\end{cases}$$
As $88n-11=8(s+11t)^2+(88v+s^*)^2+44r_2^2$, we get
$$n=(a+2t-u+v)^2+(2t+u+v)^2+(b-t-6v)^2+2(c-t+5v)^2,$$
where
$$(a,b,c)=\begin{cases}(1,s-2,2-s)&\mbox{if}\ s\in\{0,\ldots,6\},
\\(3,s-8,7-s)&\mbox{if}\ s\in\{7,8,9,10\}.
\end{cases}$$
Set
$$x=a+2t-u+v,\ y=2t+u+v,\ z=b-t-6v,\ w=c-t+5v.$$
Then $n=x^2+y^2+z^2+2w^2$ and
$$x+y+2z+2w=a+2b+2c=1.$$

(3) Finally we consider the restriction $x+y+2z+2w=2$.
By Lemma \ref{Lem3.2}, we can write $88n-44$ in the form $r_1^2+8(s+11t)^2+44r_2^2$ with $r_1\equiv 2\pmod
8$, $s\eq 2r_1\pmod{11}$ and $0\ls s\ls 10$.
(Note that $-r_1^2\eq 8(2r_1)^2\pmod{11}$.) As $r_1^2\eq 2^2\eq-44\pmod8$, we see that $r_2=2u$ for some
$u\in\Z$.
Also, we may write $r_1=88v-16s-22$ with $v\in\Z$. Thus
$$88n-44=(88v-16s-22)^2+8(s+11t)^2+44(2u)^2$$
and hence
$$n=(2t-u+v)^2+(2t+u+v)^2+(2+s-t-6v)^2+2(-1-s-t+5v)^2.$$
Set
$$x=2t-u+v,\ y=2t+u+v,\ z=2+s-t-6v,\ w=-1-s-t+5v.$$
Then $n=x^2+y^2+z^2+2w^2$ and $x+y+2z+2w=2$.

In view of the above, we have completed the proof of Theorem \ref{Th1.1}(i). \qed

\begin{lemma}\label{Lem3.3} For any $n\in\Z^+$, we can write $80n-15$ as $x^2+16y^2+80z^2$ with
$x,y,z\in\Z$.
\end{lemma}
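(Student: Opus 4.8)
The plan is to reduce the problem to one of the regular ternary forms already catalogued in Lemma \ref{Lem2.2} and then pass to the form $x^2+16y^2+80z^2$ by a Brahmagupta-type identity. The guiding observation is that $80n-15=5(16n-3)$, so it is enough to represent $16n-3$ by the form $x^2+y^2+16z^2$ and then multiply by $5$ in the right way.

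First I would verify, using the description of $E(1,1,16)$ in Lemma \ref{Lem2.2}, that $16n-3$ is represented by $x^2+y^2+16z^2$ for every $n\in\Z^+$. Since $16n-3$ is positive with $16n-3\eq13\pmod{16}$ (hence $\eq1\pmod4$ and $\eq5\pmod8$), it is none of $4^k(8m+7)$, $4m+3$, $8m+6$ or $32m+12$, so it lies outside $E(1,1,16)$. Write $16n-3=p^2+q^2+16r^2$ with $p,q,r\in\Z$. Reducing modulo $4$ gives $p^2+q^2\eq1\pmod4$, so exactly one of $p,q$ is odd; without loss of generality $p$ is odd and $q=2b$ with $b\in\Z$. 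Reducing modulo $16$ then gives $p^2+4b^2\eq13\pmod{16}$, which is possible only when $p^2\eq9\pmod{16}$ and $b$ is odd. In particular $p+b$ is even.

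It remains to invoke the identity $(p^2+(2b)^2)(1^2+2^2)=(p-4b)^2+(2p+2b)^2$, i.e.
\begin{equation*}
5(p^2+4b^2)=(p-4b)^2+16\l(\f{p+b}{2}\r)^2,
\end{equation*}
which makes sense because $(p+b)/2\in\Z$. Multiplying $16n-3=p^2+4b^2+16r^2$ by $5$ now yields
\begin{equation*}
80n-15=(p-4b)^2+16\l(\f{p+b}{2}\r)^2+80r^2,
\end{equation*}
so we may take $x=p-4b$, $y=(p+b)/2$ and $z=r$. There is no serious technical obstacle; the only non-routine point is recognizing that $80n-15=5(16n-3)$ and that multiplying $p^2+4b^2$ by $5=1^2+2^2$ produces a term $(2p+2b)^2$ which, thanks to the parities of $p$ and $b$, is exactly $16$ times a square. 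After that the argument is just a short check that $16n-3$ avoids every shape in $E(1,1,16)$, together with the parity bookkeeping guaranteeing $(p+b)/2\in\Z$.
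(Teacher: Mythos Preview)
Your proof is correct and follows essentially the same approach as the paper: write $16n-3=a^2+b^2+16c^2$ via the $E(1,1,16)$ entry of Lemma~\ref{Lem2.2}, pin down the parities of $a$ and $b$, and then multiply by $5=1^2+2^2$ using a Brahmagupta identity to reach $x^2+16y^2+80z^2$. The only cosmetic differences are that the paper works modulo $8$ (rather than $16$) to see $b\equiv2\pmod4$, and uses the variant identity $5(a^2+b^2)=(a+2b)^2+16((2a-b)/4)^2$ instead of your $(p-4b)^2+16((p+b)/2)^2$; these are equivalent sign choices in the same Gaussian-integer computation.
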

\begin{proof} By Lemma \ref{Lem2.2}, we can write $16n-3=a^2+b^2+16c^2$ with $a,b,c\in\Z$. Clearly, $a$
and $b$ cannot be both even.
Without loss of generality, we assume that $a$ is odd.
Since $-3\equiv a^2+b^2\pmod{8}$, we have $b\equiv 2\eq2a\pmod 4$. Therefore
\begin{equation*}
80n-15=5(16n-3)=(a+2b)^2+16\l(\frac{2a-b}{4}\r)^2+80c^2
\end{equation*}
with $(2a-b)/4$ integral. This concludes the proof.
\end{proof}
\medskip

\begin{lemma}\label{Lem3.4} For any $n\in\Z^+$, we can write $120n-15$ as $x^2+6y^2+30z^2$ with $x,y,z\in
\Z$.
\end{lemma}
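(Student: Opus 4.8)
The plan is to reduce, via Lemma~\ref{Lem2.2}, to the regular diagonal ternary form $x^2+2y^2+6z^2$, and then convert a representation of the auxiliary integer $8n-1$ by that form into a representation of $120n-15$ by $x^2+6y^2+30z^2$ by means of a suitable linear substitution. This is in the spirit of the proofs of Lemmas~\ref{Lem3.1}--\ref{Lem3.3}.

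First I would write $120n-15=15(8n-1)$ and note that $8n-1$ is a positive integer with $8n-1\equiv 7\pmod 8$; in particular it is odd and hence not of the form $4^k(8m+5)$, so $8n-1\notin E(1,2,6)$. By Lemma~\ref{Lem2.2} there are thus $P,Q,R\in\Z$ with $8n-1=P^2+2Q^2+6R^2$. Next, motivated by the fact that the binary form $3u^2+2v^2$ has discriminant $-24$, the same as that of $u^2+6v^2$, and represents $5$ (at $(u,v)=(1,1)$), I would set
\begin{equation*}
x=P-2R,\qquad y=P+3R,\qquad z=Q,
\end{equation*}
and check the identity $3x^2+2y^2+10z^2=5P^2+30R^2+10Q^2=5(P^2+2Q^2+6R^2)=5(8n-1)$. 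Multiplying through by $3$ then yields $(3x)^2+6y^2+30z^2=9x^2+6y^2+30z^2=15(8n-1)=120n-15$, which is the desired representation. (Here $3\mid x$ of necessity, since $3\mid 120n-15$ and $x^2+6y^2+30z^2\equiv x^2\pmod 3$ force $3$ to divide the first variable in any representation.)

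The only real content is locating the correct regular form and the correct substitution; once these are in hand the verification is an entirely routine algebraic computation, with no divisibility or parity side conditions to manage (in contrast to Lemmas~\ref{Lem3.1} and~\ref{Lem3.2}). I therefore do not expect a genuine obstacle. The one point that needs care is the choice of auxiliary form: the naive candidate $x^2+y^2+6z^2$ fails, because its exceptional set $E(1,1,6)=\{9^k(9m+3):k,m\in\N\}$ does meet $\{8n-1:\ n\in\Z^+\}$ (for instance $8\cdot 5-1=39\in E(1,1,6)$), whereas $E(1,2,6)=\{4^k(8m+5)\}$ is disjoint from it. Choosing $E(1,2,6)$ over $E(1,1,6)$ is the crux of the argument.
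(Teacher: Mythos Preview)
Your proof is correct and is essentially the same as the paper's. With $(P,Q,R)=(a,b,c)$, your final representation $(3x,y,z)=(3P-6R,\,P+3R,\,Q)$ is exactly the paper's $(3a-6c,\,a+3c,\,b)$; you have merely factored the algebraic check through the intermediate identity $3x^2+2y^2+10z^2=5(8n-1)$ before multiplying by~$3$, whereas the paper writes the single identity $15(8n-1)=(3a-6c)^2+6(a+3c)^2+30b^2$ directly.
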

\begin{proof} By Lemma \ref{Lem2.2}, there are $a,b,c\in\Z$ such that $8n-1=a^2+2b^2+6c^2$. Thus
\begin{equation*}
120n-15=15(8n-1)=(3a-6c)^2+6(a+3c)^2+30b^2.
\end{equation*}
This proves the desired result.
\end{proof}
\medskip
\noindent{\it Proof of Theorem} \ref{Th1.1}(ii). (1) We first handle the case $\lambda=1$.
By Lemma \ref{Lem3.3}, we can write $80n-15=r_1^2+16r_2^2+80x^2$ with $r_1,r_2,x\in\Z$.
Since $r_1^2\eq-15\eq1\pmod{16}$, we have $r_1\eq\pm1\pmod{8}$. Without loss of generality, we assume that
$r_1\eq1\pmod8$.
As $r_2^2\eq -r_1^2\eq(2r_1)^2\pmod{5}$, without loss of generality we may assume that $r_2=s+5t$ with
$s,t\in\Z$, $0\ls s\ls4$ and $s\eq-2r_1\pmod5$.

{\it Case} 1. $r_1\eq1\pmod{16}$.

As $5-(-1)^s20-8s\eq 2s\pmod5$ and $5-(-1)^s20-8s\eq 1\pmod{16}$, we can write $r_1=80v+5-(-1)^s20-8s$
with $v\in\Z$.
Thus $$80n-15=(80v+5-(-1)^s20-8s)^2+16(s+5t)^2+80x^2$$ and hence
$$n=x^2+(a+t+2v)^2+(b-2t+v)^2+3(c-5v)^2$$
where
$$(a,b,c)=\begin{cases}(0,-s/2,1+s/2)&\mbox{if}\ s\in\{0,2,4\},
\\(1,(1-s)/2,(s-3)/2)&\mbox{if}\ s\in\{1,3\}.
\end{cases}$$
Let
$$y=a+t+2v, \ z=b-2t+v,\ w=c-5v.$$
Then $n=x^2+y^2+z^2+3w^2$ and
$$2y+z+w=2(a+t+2v)+(b-2t+v)+(c-5v)=2a+b+c=1.$$

{\it Case} 2. $r_1\eq9\pmod{16}$.

As $5+(-1)^s20-8s\eq 2s\pmod5$ and $5+(-1)^s20-8s\eq 9\pmod{16}$, we can write $r_1=80v+5+(-1)^s20-8s$
with $v\in\Z$.
Thus
$$80n-15=(80v+5+(-1)^s20-8s)^2+16(s+5t)^2+80x^2$$ and hence
$$n=x^2+(a+t+2v)^2+(b-t-7v)^2+3(c-t+3v)^2$$
where
$$(a,b,c)=\begin{cases}(1,s/2-2,1-s/2)&\mbox{if}\ s\in\{0,2,4\},
\\(0,(s+3)/2,-(s+1)/2)&\mbox{if}\ s\in\{1,3\}.
\end{cases}$$
Let
$$y=a+t+2v,\ z=b-t-7v,\ w=c-t+3v.$$ Then $n=x^2+y^2+z^2+3w^2$ and
$$2y+z+w=2(a+t+2v)+(b-t-7v)+(c-t+3v)=2a+b+c=1.$$

(2) Now we handle the case $\lambda=3$. By Lemma \ref{Lem3.4}, there are $r_0,r_1,r_2\in\Z$ such that
$120n-15=r_0^2+6r_1^2+30r_2^2$.
 Clearly, $2\nmid r_0$ and $3\mid r_0$. Without loss of generality, we simply assume that
 $r_0\eq-1\pmod4$.
 Note that $r_1$ and $r_2$ are even since
$-2(r_1^2+r_2^2)\eq6r_1^2+30r_2^2\eq-15-r_0^2\eq0\pmod8$.
As $r_1^2\eq-r_0^2\eq(2r_0)^2\pmod5$, without loss of generality we may assume that $r_1\eq2r_0\pmod5$.
Write $r_1=2(s+5t)$ and $r_2=2x$ with $s,t,x\in\Z$, $0\ls s\ls 4$ and $s\eq r_0\pmod5$.

{\it Case} I. $r_0\eq-1\pmod 8$.

In this case, $r_0=120v+s^*$ with $v\in\Z$, where
$$s^*=\begin{cases}-24s+15&\mbox{if}\ s\in\{0,1,2\},\\-24s+135&\mbox{if}\ s\in\{3,4\}.\end{cases}$$
Thus
$$120n-15=(120v+s^*)^2+6(2(s+5t))^2+30(2x)^2$$
and hence
$$n=x^2+(a+t+6v)^2+(b+t-9v)^2+3(c-t-v)^2,$$
where
$$(a,b,c)=\begin{cases}(-s+1,2s-1,0)&\mbox{if}\ s\in\{0,1,2\},
\\(-s+7,2s-10,-1)&\mbox{if}\ s\in\{3,4\}.
\end{cases}$$
Set
$$y=a+t+6v,\ z=b+t-9v,\ w=c-t-v.$$ Then $n=x^2+y^2+z^2+3w^2$ and
$$2y+z+3w=2(a+t+6v)+(b+t-9v)+3(c-t-v)=2a+b+3c=1.$$

{\it Case} II. $r_0\eq3\pmod8$.

In this case, $r_0=120v+s_*$ with $v\in\Z$, where
$$s_*=\begin{cases}-45&\mbox{if}\ s=0,\\-24s+75&\mbox{if}\ s\in\{1,2,3,4\}.\end{cases}$$
Thus
$$120n-15=(120v+s_*)^2+6(2(s+5t))^2+30(2x)^2$$
and hence
$$n=x^2+(a+t+6v)^2+(b-2t+3v)^2+3(c-5v)^2,$$
where
$$(a,b,c)=\begin{cases}(-2,-1,2)&\mbox{if}\ s=0,
\\(-s+4,-s+2,s-3)&\mbox{if}\ s\in\{1,2,3,4\}.
\end{cases}$$
Set
$$y=a+t+6v,\ z=b-2t+3v,\ w=c-5v.$$ Then $n=x^2+y^2+z^2+3w^2$ and
$$2y+z+3w=2(a+t+6v)+b-2t+3v+3(c-5v)=2a+b+3c=1.$$

In view of the above, we have completed the proof of Theorem \ref{Th1.1}(ii). \qed

\begin{lemma}\label{Lem3.5} For any $n\in\Z^+$, we can write
$56n-24$ as $x^2+7y^2+56z^2$ with $x,y,z\in\Z$ and $2\nmid xy$.
\end{lemma}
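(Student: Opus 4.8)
The plan is to follow the template of Lemmas \ref{Lem3.1}--\ref{Lem3.4}: represent a convenient auxiliary number by a ternary quadratic form --- via Lemma \ref{Lem2.2} when a Dickson-regular diagonal form is available, otherwise via Lemma \ref{Lem2.1} and the genus theory of ternary forms --- then reshape the representation with an explicit algebraic identity, and finally tune the parities of the representing variables by sign changes and by shifting them modulo small integers (the device ``$a\equiv 1\pmod{8}$, $b\equiv (3a-1)/2\pmod{8}$'' used in Lemma \ref{Lem3.1}). Writing $56n-24=8(7n-3)$ and recalling that $8=1^2+7\cdot1^2$ is a norm from $\Q(\sqrt{-7})$, the identity to exploit is
\[ 8(a^2+7b^2+7c^2)=(a+7b)^2+7(a-b)^2+56c^2 . \]
Since $a+7b$ and $a-b$ always share the same parity, and they are both odd exactly when $a\not\equiv b\pmod{2}$, it would suffice to represent $7n-3$ by $a^2+7b^2+7c^2$ with $a$ and $b$ of opposite parity, and then put $x=a+7b$, $y=a-b$, $z=c$. (Concretely, this amounts to finding $z\in\Z$ for which $7(n-z^2)-3$ has the shape $(u^2+7v^2)/8$ with $u,v$ odd.)

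First I would deal with the representation of $7n-3$. The form $a^2+7b^2+7c^2$ is \emph{not} regular --- for instance $46=7\cdot 7-3$ is represented by it over $\R$ and over every $\Z_p$ but not over $\Z$, since none of $46$, $46-7$, $46-14$ is a perfect square --- so one cannot simply quote a diagonal form from Lemma \ref{Lem2.2} (the exceptional sets there all meet the residue class $4\pmod{7}$ through which $7n-3$ runs). Instead I would verify that $7n-3$ lies in the genus of $a^2+7b^2+7c^2$ for every $n\in\Z^+$ --- the $7$-adic condition because $7n-3\equiv 4\pmod{7}$ is a quadratic residue, the $2$-adic one by a direct computation, the other primes causing no trouble --- invoke Lemma \ref{Lem2.1} to obtain a representation of $7n-3$ by \emph{some} class $f^*$ in the genus, and then, class by class, write down the analogue of the displayed identity (with its attendant parity bookkeeping) converting an $f^*$-representation of $7n-3$ into the desired representation of $56n-24$. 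Alternatively, imitating $88n-11=11(8n-1)$ in Lemma \ref{Lem3.1}, one might look for a norm $M$ from $\Q(\sqrt{-7})$ dividing $56n-24$ for which $(56n-24)/M$ is represented by a genuinely regular form; the work then moves to locating such an $M$.

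Once a suitable representation $7n-3=f^*(a,b,c)$ is in hand, the final step is the parity adjustment. Because negating a variable leaves its parity unchanged, one really must produce a representation in which $a$ and $b$ fall into opposite residue classes modulo $2$; this is where the congruence-shifting trick of Lemma \ref{Lem3.1} enters, reducing matters to a compatibility check modulo $8$ (or $16$) anchored on $56n-24\equiv 0\pmod{8}$. I expect the main obstacle to be precisely the genus-theoretic part: the genus of $a^2+7b^2+7c^2$ contains more than one class, so one has to supply --- and check --- a working reshaping identity, together with its own parity analysis, for each class, and make sure that in every case the two conditions $2\nmid x$ and $2\nmid y$ can be satisfied simultaneously.
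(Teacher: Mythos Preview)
Your plan is in the right spirit and parallels the paper's argument closely: both use Lemma \ref{Lem2.1} for a genus with more than one class, pass between classes by an explicit identity, and then fix parities. The paper, however, works with $56n-24$ in the genus of $f(x,y,z)=x^2+7y^2+14z^2$ (whose second class is $g(x,y,z)=2x^2+7y^2+7z^2$), shows by an explicit rational change of variables that any $g$-representation of $56n-24$ transfers to an $f$-representation, then observes that in $56n-24=u^2+7v^2+14w^2$ one must have $w=2c$, and finally invokes \cite[Lemma~3.6]{S15} to rewrite the nonzero number $u^2+7v^2\equiv 0\pmod 8$ with $u,v$ both odd.

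Where your outline falls short is the parity step. The ``congruence-shifting trick of Lemma \ref{Lem3.1}'' adjusts residues \emph{modulo $4$} of variables already known to be odd; replacing $a$ by $-a$ never changes the parity of $a$. In your setup the obstacle is genuinely a parity obstacle, not a mod-$8$ compatibility: for $n=1$ the \emph{only} representation of $7n-3=4$ by $a^2+7b^2+7c^2$ is $2^2+0+0$, and your identity then yields $x=a+7b=2$, $y=a-b=2$, both even. No sign flip repairs this. One must either (i) pass to the second class in the genus of $a^2+7b^2+7c^2$ and supply a \emph{different} multiplication-by-$8$ identity whose outputs are forced odd (e.g.\ $8(2x^2+2xy+4y^2)=(3x+5y)^2+7(x-y)^2$, which gives odd outputs exactly when $x\not\equiv y\pmod 2$, and then one still has to show such a representation exists), or (ii) proceed as the paper does and quote the lemma that any nonzero $u^2+7v^2$ divisible by $8$ admits a representation with $u,v$ both odd. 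That lemma is the real engine behind the parity adjustment; it is not a consequence of sign changes, and your proposal should name it (or an equivalent device) explicitly.
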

\begin{proof} It is easy to see that $56n-24$ can be represented by the genus of
$f(x,y,z)=x^2+7y^2+14z^2$.
There are two classes in the genus of $f$, and the one not containing $f$ has an representative
$g(x,y,z)=2x^2+7y^2+7z^2$.
By Lemma \ref{Lem2.1}, $56n-24$ is represented by $f$ or $g$ over $\Z$.

In view of the identity
\begin{equation*}
f\l(\frac{2x-7y-7z}{4},\ \frac{2x+y+z}{4},\ \frac{-2y+2z}{4}\r)=g(x,y,z),
\end{equation*}
if $56n-24=g(x,y,z)$ for some $x,y,z\in\Z$ with $y\eq z\pmod2$ and $x\eq (y+z)/2\pmod2$ then $56n-24$ is
represented by $f$ over $\Z$.

Suppose that $56n-24=g(x,y,z)=2x^2+7y^2+7z^2$ with $x,y,z\in \Z$. Then $y^2+z^2\eq 2x^2\pmod8$ and hence
$y\eq z\pmod 2$.
Observe that
$$x^2\eq\f{y^2+z^2}2=\l(\f{y+z}2\r)^2+\l(\f{y-z}2\r)^2\pmod4.$$
If $(y+z)/2$ and $(y-z)/2$ are both odd, then $x^2\eq2\pmod4$ which is impossible.
Without loss of generality, we assume that $(y-z)/2$ is even. Hence $x\eq (y+z)/2\pmod2$ as desired.

By the above, $56n-24=f(u,v,w)=u^2+7v^2+14w^2$ for some $u,v,w\in\Z$. As $u^2+7v^2\eq
u^2-v^2\not\eq2\pmod4$, we have $w=2c$ for some $c\in\Z$.
Since $u^2+7v^2\eq 0\pmod8$ and $u^2+7v^2=56n-24-56c^2\not=0$, by \cite[Lemma 3.6]{S15} we can rewrite
$u^2+7v^2$ as $a^2+7b^2$ with $a$ and $b$ both odd.
Therefore $56n-24=a^2+7b^2+56c^2$ as desired.
\end{proof}

\noindent{\it Proof of Theorem} \ref{Th1.1}(iii). (1) By Lemma \ref{Lem2.2}, we can write $6n-1=r^2+s^2+6x^2$ with
$r,s,t\in \Z$. It is apparent that $3\nmid rs$ and $r\not\equiv s\pmod 2$.
 Without loss of generality, we may assume that $r\eq1\pmod 6$ and $s\eq 2\pmod 6$ (otherwise we may
 change signs of $r$ and $s$ to meet this purpose).
 Write $r=1-6w$ and $s=2-6v$ with $w,v\in\Z$. Then $6n-1=(1-6w)^2+(2-6v)^2+6x^2$ and hence
$$n=x^2+(1-2v-w)^2+2(v-w)^2+3w^2.$$
Let $y=1-2v-w$ and $z=v-w$. Then $n=x^2+y^2+2z^2+3w^2$ with $y+2z+3w=1$.

(2) By Lemma \ref{Lem3.5}, we can write $56n-24$ as $r_0^2+7r_1^2+56x^2$ with $r_0,r_1,x\in\Z$ and $2\nmid
r_0r_1$.
Since $r_0^2\eq -24\eq 2^2\pmod7$ and $2\nmid r_0r_1$, without loss of generality we may assume that
$r_0\eq2\pmod7$ and $r_1\eq-r_0\pmod4$.
Write $r_1=s+8t$ with $s\in\{\pm1,\pm3\}$ and $t\in\Z$, and
$r_0=r+56v$ with $v\in\Z$, where
$$r=\begin{cases}7s-12&\mbox{if}\ s\in\{-1,1\},\\-7s/3+16&\mbox{if}\ s\in\{-3,3\}.\end{cases}$$
Thus $56n-24=(56v+r)^2+7(s+8t)^2+56x^2$ and hence
$$n=x^2+(a+t-5v)^2+3(b+t+3v)^2+4(c-t+v)^2,$$
where
$$(a,b,c)=\begin{cases}(-(s-3)/2,\ (s-1)/2,\ 0)&\mbox{if}\ s\in\{-1,1\},
\\(s/3-1,\ 1,\ -(s-3)/6)&\mbox{if}\ s\in\{-3,3\}.
\end{cases}$$
Set
$$y=a+t-5v,\ z=b+t+3v,\ w=c-t+v.$$ Then $n=x^2+y^2+3z^2+4w^2$ and
$$y+z+2w=a+t-5v+(b+t+3v)+2(c-t+v)=a+b+2c=1.$$

In view of the above, we have completed the proof of Theorem \ref{Th1.1}(iii). \qed

\section{Proof of Theorem \ref{Th1.2}}

In light of the effective method in \cite[pp. 186--187]{Jones}, we can characterize
what natural numbers are represented by the form $5x^2+7y^2+70z^2$ locally. In particular, we have the following lemma.

\begin{lemma}\label{Lem4.1}
Set $f(x,y,z)=5x^2+7y^2+70z^2$.

{\rm (i)} If $n\in \bigcup_{k\in\N}\{4k+1, 4k+2,8k+4\}$, then $70n-2$ can be represented by $f$ locally.

{\rm (ii)} If $n\in \bigcup_{k\in\N}\{4k+3, 16k+8\}$, then $70n-32$ can be represented by $f$ locally.
\end{lemma}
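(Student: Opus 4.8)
\textit{Proof proposal.} The plan is to verify that $f$ represents $70n-2$ and $70n-32$ over $\R$ and over $\Z_p$ for every prime $p$, which is exactly what ``represented locally'' means. The Gram matrix of $f$ is $\mathrm{diag}(10,14,140)$, with determinant $2^4\cdot 5^2\cdot 7^2$, so for every prime $p\notin\{2,5,7\}$ the form $f$ is $\Z_p$-unimodular and hence represents all of $\Z_p$; and being positive definite it represents every positive real, which handles the archimedean place since $70n-2,70n-32>0$ for $n\geq 1$. Thus the whole content lies at the primes $2$, $5$, $7$.

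At $p=5$ one has $f\sim 7y^2+5(x^2+14z^2)$ over $\Z_5$. Both $70n-2$ and $70n-32$ are $\equiv 3\pmod 5$, hence $5$-adic units, and a one-line computation gives $(70n-2)/7\equiv(70n-32)/7\equiv 4\pmod 5$; as $4$ is a quadratic residue, the local square theorem makes these quotients squares in $\Z_5$, so $f$ represents $70n-2$ and $70n-32$ over $\Z_5$ with $x=z=0$. The prime $p=7$ is handled identically: over $\Z_7$ we have $f\sim 5x^2+7(y^2+10z^2)$, the numbers $70n-2$ and $70n-32$ are $7$-adic units ($\equiv 5$ and $\equiv 3\pmod 7$ respectively), and dividing by $5$ gives residues $1$ and $2$ mod $7$, both quadratic residues, so $f$ represents them over $\Z_7$ with $y=z=0$.

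The substantive case is $p=2$, where over $\Z_2$ one has the Jordan splitting $f=g\perp 70z^2$ with $g(x,y)=5x^2+7y^2$ unimodular and $70z^2=2\cdot 35z^2$ the $2$-modular part. The key observation, checked directly with the local square theorem, is that $g$ represents \emph{every} $2$-adic unit: the specializations $g(x,0)$, $g(0,y)$, $g(2,y)$, $g(x,2)$ (with $x,y$ units) sweep out the residue classes $5,7,3,1\pmod 8$, and within each such class $g$ already attains the whole class. Scaling by powers of $4$ then shows $g$ represents every $m\in\Z_2$ with $v_2(m)$ even, attaining the entire ``shell'' $\{m:v_2(m)=2k\}$ for each $k\geq 0$. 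Consequently, when the target has even $2$-adic valuation we may take $z=0$; this happens for $70n-2$ when $n\equiv 1\pmod 4$ (there $v_2=2$) and for $70n-32$ when $n\equiv 8\pmod{16}$ (there $v_2=4$). In the other cases the target has $v_2=1$; writing it as $2u$ with $u$ odd and letting $z$ run over $2$-adic units, $70z^2$ sweeps out $6+16\Z_2$, so $2u-70z^2$ ranges over the whole coset $(2u-6)+16\Z_2$. If $u\equiv 1$ or $5\pmod 8$ this coset consists of valuation-$2$ elements, while if $u\equiv 3\pmod 8$ it equals $16\Z_2$, which meets the valuation-$4$ shell; in either case $g$ represents a suitable value, so $f$ represents the target. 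A short check of congruences modulo $16$ confirms that $70n-2$ has $u\equiv 1,5\pmod 8$ when $n\equiv 2\pmod 4$ and $u\equiv 3\pmod 8$ when $n\equiv 4\pmod 8$, and that $70n-32$ has $u\equiv 1,5\pmod 8$ when $n\equiv 3\pmod 4$; this is precisely the list in the statement.

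The step I expect to be the main obstacle is the $2$-adic bookkeeping: since $g$ misses every odd $2$-adic valuation, one must make sure that, after subtracting an admissible value of $70z^2$, the target really lands in an \emph{even} valuation shell of $g$, and it is exactly this parity constraint that singles out the congruence classes of $n$ appearing in the lemma (and excludes, for instance, $n\equiv 0\pmod 8$ from part (i), where $70n-2\equiv 14\pmod{16}$ is not $2$-adically represented at all). Alternatively, this entire local analysis can be performed mechanically via the algorithm of Jones \cite{Jones} quoted just before the lemma, reducing the proof to a finite computation.
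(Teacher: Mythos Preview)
Your argument is correct: the reduction to $p\in\{2,5,7\}$, the one-variable specializations at $5$ and $7$, and the $2$-adic analysis via the splitting $g\perp 70z^2$ (together with the observation that $g=5x^2+7y^2$ over $\Z_2$ represents exactly the elements of even $2$-adic valuation) all check out, and your congruence bookkeeping for $70n-2$ and $70n-32$ lands precisely on the residue classes of $n$ listed in the lemma.

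The paper itself gives no proof of this lemma beyond invoking the algorithm in \cite[pp.~186--187]{Jones} for determining which integers are locally represented by a ternary form; your write-up is effectively an explicit, by-hand execution of that local analysis. So the two approaches coincide in substance, with yours supplying the computation that the paper leaves to the cited reference.
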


\noindent{\it Proof of Theorem} \ref{Th1.2}(i). Let $f(x,y,z)=5x^2+7y^2+70z^2$.
By Lemma \ref{Lem2.4}, there is only one spinor genus in the genus of $f$.
It is easy to see that $2$ is the only anisotropic prime. Note also that
$$\{n\in\N:\ 16\nmid n\}=\bigcup_{k\in\N}\{4k+1,4k+2,4k+3,8k+4,16k+8\}.$$

{\it Case} 1. $n\in \bigcup_{k\in\N}\{4k+1, 4k+2, 8k+4\}$.

 In this case, $70n-2$ can be represented by $f$ locally (by Lemma \ref{Lem4.1}), and
$70n-2$ has bounded divisibility at $2$.
Thus, by Lemma \ref{Lem2.3}, if $n$ is large enough then $70n-2=5r_0^2+7r_1^2+70w^2$ for some
$r_0,r_1,w\in\Z$.
Since $r_0^2\eq1\pmod7$ and $r_1^2\eq-1\eq2^2\pmod5$, without loss of generality we may assume that
$r_0\eq1\pmod7$
and $r_1\eq2\pmod5$. It is clear that $r_0\eq r_1\pmod2$.

If $r_0\eq1\pmod{14}$, then $r_0=14u+1$ and $r_1=10v-3$ for some $u,v\in\Z$,
hence $70n-2=5(14u+1)^2+7(10v-3)^2+70w^2$ and thus $n=x^2+y^2+z^2+w^2$, where
$x=1+u-3v$, $y= 3u+v$ and $z=-2u$. Note that $x+3y+5z=1$.

If $r_0\eq8\pmod{14}$, then $r_0=14u+8$ and $r_1=10v+2$ for some $u,v\in\Z$,
hence $70n-2=5(14u+8)^2+7(10v+2)^2+70w^2$ and thus $n=x^2+y^2+z^2+w^2$, where
$x=u-3v$, $y= 2+3u+v$ and $z=-1-2u$. Obviously, $x+3y+5z=1$.

{\it Case} 2.  $n\in \bigcup_{k\in\N}\{4k+3,16k+8\}$.

In this case, $70n-32$ can be represented by $f$ locally (by Lemma \ref{Lem4.1}), and
 $70n-32$ has bounded divisibility at $2$.
Thus, by Lemma \ref{Lem2.3}, if $n$ is large enough then $70n-32=5r_0^2+7r_1^2+70w^2$ for some
$r_0,r_1,w\in\Z$.
Since $r_0^2\eq4^2\pmod7$ and $r_1^2\eq-1\eq3^2\pmod5$, without loss of generality we may assume that
$r_0\eq4\pmod7$
and $r_1\eq3\pmod5$. Obviously, $r_0\eq r_1\pmod2$.

If $r_0\eq4\pmod{14}$, then $r_0=14u+4$ and $r_1=10v-2$ for some $u,v\in\Z$,
hence $70n-32=5(14u+4)^2+7(10v-2)^2+70w^2$ and thus $n=x^2+y^2+z^2+w^2$, where
$x=1+u-3v$, $y= 1+3u+v$ and $z=-2u$. Note that $x+3y+5z=4$.

If $r_0\eq-3\pmod{14}$, then $r_0=14u-3$ and $r_1=10v+3$ for some $u,v\in\Z$,
hence $70n-32=5(14u-3)^2+7(10v+3)^2+70w^2$ and thus $n=x^2+y^2+z^2+w^2$, where
$x=-1+u-3v$, $y= 3u+v$ and $z=1-2u$. Clearly, $x+3y+5z=4$.

In view of the above, we have completed our proof of Theorem \ref{Th1.2}(i).
\medskip

\noindent{\it Proof of Theorem} \ref{Th1.2}(ii). (1) We first handle the case $(\mu,\nu)=(2,3)$.
In view of \cite[Theorem 6.3.1, p.\,148]{Ki}, there is only one spinor genus in the genus of $g(x,y,z)=2x^2+7y^2+84z^2$.
Note that $2$ is the only anisotropic prime for this genus.
For any $n\in\Z^+$, the odd number $42n-3$ can be represented by the genus of $g$. By Lemma \ref{Lem2.3},
if $n$ is large enough then $42n-3=2r_0^2+7r_1^2+84w^2$ for some $r_0,r_1,w\in\Z$. Clearly, $2\nmid r_1$.
As $r_0^2\eq -3/2\eq 3^2\pmod7$
and $r_1^2-r_0^2\eq0\pmod3$, without loss of generality, we may assume that $r_0\eq3\pmod7$ and
$r_1\eq-r_0\pmod3$.
Note that $r_0\eq (7r_1-15)/2\pmod{21}$. Write $r_1=1+2s+6t$ and $r_0=21v+(7s-15)/2$ with $s\in\{0,1,2\}$
and $t,v\in\Z$. Then
$$42n-3=2(21v+7s-4)^2+7(1+2s+6t)^2+84w^2$$
and hence
$$n=\l(1-s+t-4v\r)^2+\l(-s-2t-v\r)^2+\l(s+t+2v\r)^2+2w^2.$$
Set
$$x=1-s+t-4v,\ y=-s-2t-v,\ z=s+t+2v.$$
Then $n=x^2+y^2+z^2+2w^2$ with $x+2y+3z=1$.

(2) Now we handle the case $(\mu,\nu)=(2,5)$.
By \cite[Theorem 6.3.1, p.\,148]{Ki}, there is only one spinor genus in the genus of $5x^2+6y^2+60z^2$. It is easy to see
that the only anisotropic primes are $2,3,5$.
For any $n\in\Z^+$, the number $30n-1$ is represented by the genus of $5x^2+6y^2+60z^2$ and it is relatively prime to
$2\times3\times5$. Thus, by Lemma \ref{Lem2.3}, if $n$ is large enough then $30n-1=5r_0^2+6r_1^2+60w^2$
with $r_0,r_1,w\in\Z$. As $r_0$ or $-r_0$ is congruent to 1 modulo 6 and $r_1^2\eq -1\eq 2^2\pmod 5$,
without loss of generality we may write $r_0=6u+1$ and $r_1=5v+2$ with $u,v\in\Z$. Thus
$30n-1=5(6u+1)^2+6(5v+2)^2+60w^2$ and hence
$$n=(1+u+2v)^2+(2u-v)^2+(-u)^2+2w^2.$$
If we set $x=1+u+2v$, $y=2u-v$ and $z=-u$, then $n=x^2+y^2+z^2+2w^2$ with $x+2y+5z=1$.

(3) Now we consider the case $(\mu,\nu)=(3,4)$.
In light of \cite[Theorem 6.3.1, p.\,148]{Ki}, there is only one spinor genus in the genus of $x^2+26y^2+156z^2$.
Note that $2$ is the only anisotropic prime for this genus.
For any $n\in\Z^+$, the odd number $78n-3$ can be represented by the genus of $x^2+26y^2+156z^2$.
Thus, in view of Lemma \ref{Lem2.3}, provided that $n$ is sufficiently large we can write $78n-3$ as
$r_0^2+26r_1^2+156w^2$
with $r_0,r_1,w\in\Z$. As $r_0^2\eq-3\eq 7^2\pmod{13}$ and $r_0^2-r_1^2\eq0\pmod3$,
without loss of generality we may assume that $r_0\eq7\pmod{13}$ and $r_1\eq r_0\pmod3$.
Write $r_1=1+2s+3t$ with $s,t\in\Z$ and $|s|\ls1$. Then $78u+26s+7\eq2s+1\eq r_1\pmod3$
and $78u+26s+7\eq 7\pmod{13}$, and so $r_0=78u+26s+7$ for some $u\in\Z$. Thus
$$78n-3=(78u+26s+7)^2+26(1+2s+3t)^2+156w^2$$
and hence
$$n=(1+3s+t+7u)^2+(-s+t-5u)^2+(-t+2u)^2+2w^2.$$
If we put $x=1+3s+t+7u$, $y=-s+t-5u$ and $z=-t+2u$, then $n=x^2+y^2+z^2+2w^2$ and $x+3y+4z=1$.

(4) Now we consider the restriction $2y+z+w=1$. By Lemma \ref{Lem2.4}, there is only one spinor genus in
the genus
of $x^2+11y^2+55z^2$. It is easy to see that $11$ is the only anisotropic prime. For each $n\in\Z^+$,
the number $55n-10$ is represented by the genus of $x^2+11y^2+55z^2$ and it is relative prime to $11$. Hence, by Lemma
\ref{Lem2.3}, if $n$ is large enough then $55n-10=r_0^2+11r_1^2+55x^2$ with $r_0,r_1,x\in\Z$. Since
$r_0^2\equiv 1\pmod {11}$ and $r_1^2\equiv (2r_0)^2\pmod 5$, without loss of generality, we may assume
that
$r_0\equiv -1\pmod {11}$ and $r_1\equiv 2r_0\pmod 5$. Write $r_1=s+5t$ with $s,t\in\Z$ and $0\le s\le4$,
and $r_0=r+55v$ with $v\in\Z$, where $r=-22s+10$. As $55n-10=(r+55v)^2+11(s+5t)^2+55x^2$, we get
$$n=x^2+(a+t-2v)^2+(b-2t-v)^2+2(c+5v)^2,$$ where $(a,b,c)=(s,0,1-2s)$. Set
$$y=a+t-2v,\ z=b-2t-v,\ w=c+5v.$$ Then $n=x^2+y^2+z^2+2w^2$ and $2y+z+w=1$.

In view of the above, we have finished the proof of Theorem \ref{Th1.2}(ii). \qed

\begin{lemma}\label{Lem4.2} If $n\in\Z^+$ is large enough, then we can write $80n-15$ as
$x^2+16y^2+160z^2$ with $x,y,z\in\Z$ and $x\eq9\pmod{16}$.
\end{lemma}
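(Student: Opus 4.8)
The plan is to apply Lemmas \ref{Lem2.1} and \ref{Lem2.3} to the ternary form $f(x,y,z)=x^{2}+16y^{2}+160z^{2}$ with target $m=80n-15$. First I would record the local data: $d(f)=2^{11}\cdot5$, and a short $p$-adic check shows $2$ is the only anisotropic prime --- over $\Q_{2}$, $f\cong\langle1,1,10\rangle$ is anisotropic since $-10$ is not a sum of two squares in $\Q_{2}$; at $p=5$ the unimodular part $\langle1,16\rangle$ is isotropic because $-1$ is a square mod $5$; and at every other prime $f$ has good reduction, hence is isotropic. Since $16\mid80$ we have $m\equiv1\pmod{16}$, so $m$ is odd and therefore has (trivially) bounded divisibility at $2$; being $\equiv1\pmod8$, $m$ is a square in $\Z_{2}$, so $f$ represents $m$ over $\Z_{2}$; at $p=5$ the isotropic part $\langle1,16\rangle$ represents every $5$-adic integer, and at every other prime $f$ is $\Z_{p}$-unimodular of rank $3$, hence universal over $\Z_{p}$. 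Thus $m$ is represented by $f$ over each $\Z_{p}$, and by Lemma \ref{Lem2.1} it is represented by some form in the genus of $f$.

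The heart of the matter is the spinor exceptional square classes. Because $f$ is diagonal with $16\mid d(f)$, Lemma \ref{Lem2.4} is silent; in fact a local spinor-norm computation at $2$ shows the genus of $f$ splits into two spinor genera, so there is a nonempty finite set of spinor exceptional square classes which $m$ must avoid. I would determine these explicitly using Schulze-Pillot's criterion (as in \cite{SP80, DR, EHH}). The decisive input is that $m=80n-15=5(16n-3)$ with $16n-3\equiv13\pmod{16}$, while a perfect square is $\equiv0,1,4$ or $9\pmod{16}$; hence $16n-3$ is never a perfect square, so $m$ is never $5\cdot(\text{square})$, and being odd it is never $10\cdot(\text{square})$ either --- and in view of $d(f)\equiv10\pmod{(\Q^{*})^{2}}$ these are the only square classes that can be spinor exceptional for $f$. (In the sporadic case that $m$ itself is a perfect square, the representation $m=(\sqrt{m}\,)^{2}+16\cdot0^{2}+160\cdot0^{2}$ is trivial.) Granting this, $r(\gen(f),m)=r(\spn(f),m)$, and since $m$ is represented by the genus and has bounded divisibility at every anisotropic prime, Lemma \ref{Lem2.3} gives that $m$ is represented by $f$ over $\Z$ as soon as $n$ is large enough.

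The step I expect to be the main obstacle is precisely the explicit determination of the spinor exceptional square classes of $\gen(x^{2}+16y^{2}+160z^{2})$ and the check that $80n-15$ lies in none of them, which requires the local spinor-norm and Hasse-symbol computations behind Schulze-Pillot's theorem. A cleaner alternative that avoids such a large discriminant is to apply Lemmas \ref{Lem2.1} and \ref{Lem2.3} instead to $g(x,y,z)=x^{2}+y^{2}+32z^{2}$ and prove $16n-3=a^{2}+b^{2}+32c^{2}$ for all large $n$: here $d(g)=2^{7}$, $2$ is again the only anisotropic prime, $16n-3\equiv13\pmod{16}$ is odd and easily locally represented, and one finds the unique spinor exceptional square class of $\gen(g)$ to be $2(\Q^{*})^{2}$, which the odd number $16n-3$ avoids. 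Finally, since $a^{2}+b^{2}\equiv13\pmod{16}$ forces (after possibly interchanging $a$ and $b$) $a$ odd with $a\equiv\pm3\pmod8$ and $b\equiv2\pmod4$, one gets $4\mid2a-b$, and the identity $5(a^{2}+b^{2})=(a+2b)^{2}+(2a-b)^{2}$ yields $80n-15=(a+2b)^{2}+16\bigl((2a-b)/4\bigr)^{2}+160c^{2}$, the desired representation.
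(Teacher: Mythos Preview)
Your ``cleaner alternative'' via $g(x,y,z)=x^{2}+y^{2}+32z^{2}$ is exactly the paper's proof: represent $16n-3$ by $g$ for large $n$ using Lemma~\ref{Lem2.3} (here $d(g)=2^{7}$, the only anisotropic prime is $2$, and the unique spinor exceptional square class is $2(\Q^{*})^{2}$, which the odd number $16n-3$ avoids), then multiply by $5$ via $5(a^{2}+b^{2})=(a+2b)^{2}+(2a-b)^{2}$ and use the parity information from $a^{2}+b^{2}\equiv13\pmod{16}$ to see that one of $a+2b$, $2a-b$ is divisible by $4$. Your parity argument (fixing $a$ odd with $a\equiv\pm3\pmod8$ and $b\equiv2\pmod4$, hence $4\mid 2a-b$) is a slightly more explicit version of the paper's observation that $(2a+b)^{2}+(a-2b)^{2}\equiv1\pmod8$ forces one term to be divisible by~$4$; both are correct.

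Your first route, working directly with $f(x,y,z)=x^{2}+16y^{2}+160z^{2}$, has precisely the gap you identify. The assertion that the spinor exceptional square classes of $\gen(f)$ lie among $5(\Q^{*})^{2}$ and $10(\Q^{*})^{2}$ ``in view of $d(f)\equiv10\pmod{(\Q^{*})^{2}}$'' is not a valid shortcut: the candidate squarefree parts of primitive spinor exceptions are governed by the local spinor-norm groups $\theta(O^{+}(f_{p}))$ and the relative spinor-norm maps, not merely by the square class of the discriminant, and at the highly ramified prime $2$ (with $d(f)=2^{11}\cdot5$) this computation is genuinely delicate. So without carrying out the Schulze-Pillot/Earnest--Hsia--Hung analysis in full, the first approach is incomplete --- which is why the paper, and your own alternative, descend to the form $x^{2}+y^{2}+32z^{2}$ of much smaller $2$-adic level.
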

\Proof. There are three classes in the genus of $x^2+y^2+32z^2$ with the following three corresponding
representatives:
\begin{align*}
f_1(x,y,z)&=x^2+y^2+32z^2,\\
f_2(x,y,z)&=2x^2+2y^2+9z^2+2yz-2zx,\\
f_3(x,y,z)&=x^2+4y^2+9z^2-4yz.
\end{align*}
Note that $f_1$ and $f_2$ constitute a spinor genus while another spinor genus in this genus contains $f_3$.
Using the methods from \cite{EHH}, one may easily deduce that $2\Z^2$ is the only spinor exceptional
square class of this genus and it is clearly that $2$ is the only anisotropic prime.
For any $n\in\Z^+$, the odd number $16n-3$ can be represented by the genus of $x^2+y^2+32z^2$.
Thus, by Lemma \ref{Lem2.3}, if $n$ is large enough then $16n-3=a^2+b^2+32c^2$ for some $a,b,c\in\Z$
with $2\nmid a$ and $2\mid b$. Clearly, $b/2$ is odd. Without loss of generality, we may assume that
$b/2\eq 2n-a\pmod4$. Note that
$$80n-15=5(16n-3)=5(a^2+b^2+32c^2)=(2a+b)^2+(a-2b)^2+160c^2.$$
Since $2a+b\eq4n\pmod8$, we have
$$(a-2b)^2\eq80n-15-(4n)^2\eq-15\eq 9^2\pmod{32}$$
and hence $a-2b\eq\pm9\pmod{16}$. Therefore
the desired result follows.  \qed

\medskip
\noindent {\it Proof of Theorem} \ref{Th1.2}(iii). (1) We first handle the case $P(x,y,z,w)=x+2y+w$.
By Lemma \ref{Lem4.2}, provided that $n\in\Z^+$ is large enough we can write $80n-15=r_0^2+16r_1^2+160z^2$
with $r_0,r_1,z\in\Z$ and $r_0\eq9\pmod{16}$. As $r_1^2\eq -r_0^2\eq (2r_0)^2\pmod5$,
without loss of generality we may assume that $r_1\eq2r_0\pmod{5}$
(otherwise we may adjust the sign of $r_1$ to meet our purpose).
Write $r_1=s+5t$ with $s\in\{-1,0,1,2,3\}$ and $t\in\Z$. Then $5+(-1)^s20+8s\eq 9\pmod{16}$
and $s\eq2(5+(-1)^s20+8s)\pmod{5}$. So $r_0=80u+5+(-1)^s20+8s$ for some $u\in\Z$.
Thus
$$80n-15=(80u+5+(-1)^s20+8s)^2+16(s+5t)^2+160z^2$$
and hence
$$n=(a+t-7u)^2+(b-t+2u)^2+2z^2+3(c+t+3u)^2,$$
where
$$(a,b,c)=\begin{cases}((3-s)/2,0,(s-1)/2)&\mbox{if}\ s\in\{-1,1,3\},
\\(-2-s/2,1,1+s/2)&\mbox{if}\ s\in\{0,2\}.
\end{cases}$$
If we set
$$x=a+t-7u,\ y=b-t+2u,\ w=c+t+3u,$$
then $n=x^2+y^2+2z^2+3w^2$ with
$$x+2y+w=a+t-7u+2(b-t+2u)+c+t+3u=a+2b+c=1.$$

(2) Now we handle the case $P(x,y,z,w)=y+z+w$.
By Lemma \ref{Lem2.4}, there is only one spinor genus in the genus of $x^2+11y^2+99z^2$. Note that $11$ is
the only anisotropic prime.
For any $n\in\Z^+$, the number $99n-54$ is not divisible by 11 and it can be represented by the genus of
$x^2+11y^2+99z^2$.
 Thus, by Lemma \ref{Lem2.3}, provided that $n$ is large enough we can write
 $99n-54$ as $r_0^2+11r_1^2+99x^2$ with $r_0,r_1,x\in\Z$.
As $r_0^2\eq1\pmod{11}$ and $r_1^2\eq r_0^2\eq (2r_0)^2\pmod3$, without loss of generality we may assume
that
$r_0\eq1\pmod{11}$ and $r_1\eq 2r_0\pmod3$. Write $r_1=s+9t$ with $s,t\in\Z$ and $0\ls s\ls 8$.
Also, we may write $r_0=99u-22s+s_0$ with $u\in\Z$, where $s_0=45$ if $s\ls4$, and $s_0=144$ otherwise.
Then
$$99n-54=(99u-22s+s_0)^2+11(s+9t)^2+99x^2$$
and hence
$$n=x^2+(a+2t+u)^2+2(b-t-5u)^2+3(c-t+4u)^2,$$
where
$$(a,b,c)=\begin{cases}(1,s-2,2-s)&\mbox{if}\ s\ls4,
\\(2,s-7,6-s)&\mbox{if}\ s>4.
\end{cases}$$
If we set $y=a+2t+u$, $z=b-t-5u$ and $w=c-t+4u$, then $n=x^2+y^2+2z^2+3w^2$
with $y+z+w=a+b+c=1$.

(3) Finally we handle the case $P(x,y,z,w)=y+2z+w$.
In view of Lemma \ref{Lem2.4}, there is only one spinor genus in the genus of $x^2+5y^2+30z^2$.
It is easy to see that $2$ is the only anisotropic prime.
For any $n\in\Z^+$, the odd number $30n-9$ can be represented by the genus of $x^2+5y^2+30z^2$.
In light of Lemma \ref{Lem2.3}, provided that $n$ is large enough we can write
$30n-9$ as $r_0^2+5r_1^2+30x^2$ with $r_0,r_1,x\in\Z$. As $r_0^2+5r_1^2$ is a positive multiple of $3$,
by \cite[Lemma 2.1]{S15} we can rewrite $r_0^2+5r_1^2$ as $(r_0')^2+5(r_1')^2$ with $r_0',r_1'\in\Z$
and $3\nmid r_0'r_1'$.
For simplicity, we just assume that $3\nmid r_0r_1$.
As $r_0^2\eq1\pmod5$ and $r_0^2-r_1^2\eq0\pmod 3$,
without loss of generality we may assume that $r_0\eq1\pmod5$ and $r_1\eq-r_0\pmod3$.
Write $r_1=s+6t$ with $s\in\{\pm1,\pm2\}$ and $t\in\Z$. Also, we may write $r_0=30u+s_0$ with $u\in\Z$,
where
$$s_0=\begin{cases}6-10s&\mbox{if}\ s\in\{-1,1\},
\\-9+5s&\mbox{if}\ s\in\{-2,2\}.
\end{cases}$$
Thus
$$30n-9=(30u+s_0)^2+5(s+6t)^2+30x^2$$
and hence
$$n=x^2+(a+2t+u)^2+2(b-t+u)^2+3(c-3u)^2,$$
where
$$(a,b,c)=\begin{cases}((3s-1)/2,(1-s)/2,(1-s)/2)&\mbox{if}\ s=\pm1,
\\(s/2,0,1-s/2)&\mbox{if}\ s=\pm2.
\end{cases}$$
If we set $y=a+2t+u$, $z=b-t+u$ and $w=c-3u$, then $n=x^2+y^2+2z^2+3w^2$ and
$y+2z+w=a+2b+c=1$.

The proof of Theorem \ref{Th1.2}(iii) is now complete. \qed

\subsection*{Acknowledgements}
The work is supported by the National Natural Science
Foundation of China (grant 11971222).
The authors would like to thank the referee for helpful comments.

\normalsize

\end{document}